\newtheorem{thm}{Theorem}[section]
\newtheorem{cor}[thm]{Corollary}
\newtheorem{lem}[thm]{Lemma}
\theoremstyle{definition}
\theoremstyle{example}
\newtheorem{exam}{Example}
\theoremstyle{remark}
\newtheorem{rem}[thm]{Remark}
\numberwithin{equation}{section}
\begin{document}
\setcounter{page}{1}
\title[Logarithmic integrals]{Logarithmic integrals with applications to BBP and Euler--type sums}
\author{necdet Bat{\i}r}%
\address{department of  mathematics, nev{\c{s}}ehir hbv university, nev{\c{s}}ehir, 50300 turkey}%
\email{nbatir@hotmail.com}%
\begin{abstract} For real numbers $p,q>1$  we consider the following family of integrals:
\begin{equation*}
\int_{0}^{1}\frac{(x^{q-2}+1)\log\left(x^{mq}+1\right)}{x^q+1}{\rm d}x \quad \mbox{and}\quad \int_{0}^{1}\frac{(x^{pt-2}+1)\log\left(x^t+1\right)}{x^{pt}+1}{\rm d}x.
\end{equation*}
We evaluate these integrals for all $m\in\mathbb{N}$, $q=2,3,4$ and $p=2,3$ explicitly. They recover some previously known integrals. We also compute  many integrals over the infinite interval $[0,\infty)$. Applying these results we offer many new Euler- BBP- type sums.
\end{abstract}
\subjclass[2020]{11B83, 11M35, 33B15, 26A36}
\keywords{Definite integrals, logarithmic integrals, harmonic numbers, gamma function, polygamma functions, BBP type series.}

\maketitle
\section{Introduction}
In the recent paper \cite{19}, Sofo has evaluated a number of definite integrals over the unit interval involving logarithmic and polylogarithmic functions elegantly. For example, he proved
\begin{align}\label{e:1}
\int_{0}^{1}\frac{\log\left(x^4+1\right)}{x^2+1}{\rm d}x&=\frac{\pi}{4}\log \left(6+4\sqrt{2}\right)-2G,
\end{align}
\begin{align}\label{e:2}
\int_{0}^{1}\frac{\log\left(x^6+1\right)}{x^2+1}{\rm d}x=\frac{\pi}{2}\log 6-3G,
\end{align}
and

\begin{align}\label{e:3}
\int_{0}^{1}\frac{\log\left(x^{10}+1\right)}{x^2+1}{\rm d}x&=-\frac{\pi}{2}\log\left(\frac{72-32\sqrt{5}}{5}\right)-3G,
\end{align}
where $G$ is the Catalan constant; see also \cite{20,22, 5}. Our main aim in the present paper is to offer some general logarithmic integral formulas which cover the integrals given by (\ref{e:1}), (\ref{e:2}) and  (\ref{e:3}) as special cases. Our evaluations include many other logarithmic integrals. More precisely,  we evaluated the following families of integrals explicitly for many values of  the parameters $m,q,p$ and $t$: For $q,\,t>1$, $m,\,p\in\mathbb{N}$
\begin{equation*}
\int_{0}^{1}\frac{\left(x^{q-2}+1\right)\log\left(x^{mq}+1\right)}{x^q+1}{\rm d}x, \quad \int_{0}^{\infty}\frac{x^{q-2}\log\left(x^{mq}+1\right)}{x^{q}+1}{\rm d}x
\end{equation*}
and
\begin{equation*}
\int_{0}^{1}\frac{\left(x^{pt-2}+1\right)\log\left(x^{t}+1\right)}{x^{pt}+1}{\rm d}x.
\end{equation*}

In 1997 Bailey \textit{et all}. \cite{2}  discovered the following elegant series representation for the constant $\pi$, by using the PSLQ \textit{integer relation algorithm }
\begin{equation*}
\pi=\sum_{n=0}^{\infty}\frac{1}{16^n}\left(\frac{4}{8n+1}-\frac{2}{8n+4}-\frac{1}{8n+5}-\frac{1}{8n+6}\right).
\end{equation*}
This formula enables one to compute the $n$ th hexadecimal or binary digit of $\pi$, without computing any of the first $n-1$ digits. Since then this formula is known as BBP formula for $\pi$. More generally, formulas of the following type are known in the literature as BBP-type formulas for a given  mathematical constant $\alpha$:
\begin{equation*}
\alpha=\sum_{n=0}^{\infty}\frac{1}{b^n}\sum_{k=1}^{m}\frac{a_k}{(mn+k)^d},
\end{equation*}
where $b,d$ and $m$ are positive integers with $b\geq 2$, and $(a_1,a_2,...,a_m)\in\mathbb{Z}^m$. After the discovery of  the BBP-type formula for $\pi$  mentioned above,  many authors devoted themselves to find new BBP-type formulas for the other mathematical constants and they provided interesting formulas; see \cite{2, 4,29,3,26, 27,28}. In the literature there exist different variants of BBP-type formulas. In the very recent papers \cite{19,20,22} Sofo derived many series representations for various constants consisting of a combination  of the Euler sums and BBP-type series using the solutions of some definite logarithmic and polylogarithmic integrals. We recall some of them here.
\begin{align*}
\sum_{n=1}^{\infty}(-1)^{n+1}H_n\left(\frac{1}{6n+1}-\frac{1}{6n+3}+\frac{1}{6n+5}\right)=\frac{\pi}{2}\log 6-3G,
\end{align*}
see \cite[pg. 68]{19}
\begin{align*}
\sum_{n=1}^{\infty}(-1)^{n+1}H_n\bigg(\frac{1}{10n+1}&-\frac{1}{10n+3}+\frac{1}{10n+5}-\frac{1}{10n+7}+\frac{1}{10n+9}\bigg)\\
&=-\frac{\pi}{4}\log\left(\frac{72-32\sqrt{5}}{5}\right)+\frac{5\pi}{2}\log 2-5G;
\end{align*}
see \cite[pg. 166]{20}, and
\begin{align*}
2\sum_{n=1}^{\infty}H_n\bigg(\frac{1}{(8n+2)^3}&-\frac{1}{(8n+4)^3}+\frac{1}{(8n+6)^3}-\frac{1}{(8n+8)^3}\bigg)\\
&=-\frac{337}{512}\zeta(4)-\frac{1}{4}G^2-\frac{77}{128}\zeta(3)\log 2;
\end{align*}
see \cite[pg. 323]{22}. Our second aim in this paper is to present many new Euler- BBP--type formulae using the integrals calculated in the second section.

For our purpose, we recall some basic properties of the familiar gamma function $\Gamma$, and the digamma (or psi) function $\psi(x)=\Gamma^\prime(x)/\Gamma(x)$ ($x>0$), which will be used extensively throughout  the paper. The gamma function satisfies the reflection formula
\begin{equation}  \label{e:4}
\Gamma(s)\Gamma(1-s)=\frac{\pi}{\sin(\pi s)} \quad (s\in\mathbb{C}\backslash%
\mathbb{Z}).
\end{equation}
see \cite[p.\ 253]{13}. Other important functional equations satisfied by the
gamma function are the duplication formula
\begin{equation}  \label{e:5}
\Gamma\left(k+\frac{1}{2}\right)=\frac{(2k)!\sqrt{\pi}}{4^kk!};
\end{equation}
see \cite[p.\ 252]{13}, and  the Gauss multiplication formula for $k=2,3,4,...$ and $x>0$
\begin{equation}\label{e:6}
\Gamma(kx)=\frac{k^{kx-1/2}}{(2\pi)^{(k-1)/2}}\Gamma(x)\Gamma\left(x+\frac{1}{k}\right)\Gamma\left(x+\frac{2}{k}\right)\cdots\Gamma\left(x+\frac{k-1}{k}\right);
\end{equation}
see \cite[pg. 259]{13}. The beta function $B(\cdot,\cdot)$ is defined by
\begin{equation}\label{e:7}
B(s,t)=\int_{0}^{1}x^{s-1}(1-x)^{t-1}{\rm d}x=\frac{\Gamma(s)\Gamma(t)}{\Gamma(s+t)}\quad (\Re(s,t)>0).
\end{equation}
The digamma function $\psi$ and harmonic numbers $H_n$ are related to
\begin{equation}  \label{e:8}
\psi(n+1)=-\gamma+H_n \quad (n\in\mathbb{N}_0);
\end{equation}
see \cite[p.\ 31]{24}, where $\gamma=0.57721\cdots$ is the Euler-Mascheroni
constant. The digamma function $\psi$ satisfies a reflection formula and a
duplication formula similar to those of the gamma function:
\begin{equation}  \label{e:9}
\psi(s)-\psi(1-s)=-\pi\cot(\pi s) \quad (s\in\mathbb{C}\backslash\mathbb{Z}),
\end{equation}
and
\begin{equation*}
\psi\left(s+\frac{1}{2}\right)=2\psi(2s)-\psi(s)-2\log2 \quad (s\in\mathbb{C}%
\backslash\mathbb{Z}^-);
\end{equation*}
see \cite[p.\ 25]{24}. In particular, for $s=k\in\mathbb{N}$, we have
\begin{equation}  \label{e:10}
\psi\left(k+\frac{1}{2}\right)=2H_{2k}-H_k+\psi(1/2),
\end{equation}
where $H_n=1+\frac{1}{2}+\cdots+\frac{1}{n}$ is the n th harmonic number.
The following lemma has  a prominent role in the proofs of our main results given in the second section.
\begin{lem}\label{Lemma}
Let $x$ be a non-zero real number. Then we have
\begin{equation}\label{e:11}
\log\left[\left(\frac{1-x}{x}\right)^m+1\right]=\sum_{k=0}^{\frac{m}{2}-1}\log\big[1-2x(1-x)(\varphi_k+1)\big]-m\log x
\end{equation}
if $m\geq 2$ is an even integer, and
\begin{equation}\label{e:12}
\log\left[\left(\frac{1-x}{x}\right)^m+1\right]=\sum_{k=0}^{\frac{m-3}{2}}\log\big[1-2x(1-x)(\varphi_k+1)\big]-m\log x
\end{equation}
if $m\geq 1$ is an odd integer.  Thorough the paper the empty sum  $\sum_{k=0}^{-1}(\cdot)$ is taken to be zero. Here
\begin{equation}\label{e:13}
\varphi_k=\cos\left(\frac{(2k+1)\pi}{m}\right).
\end{equation}
\end{lem}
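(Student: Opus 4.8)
The plan is to reduce the identity to the classical factorization of the polynomial $z^m+1$ into real irreducible factors and then substitute $z=(1-x)/x$. First I would recall that the roots of $z^m+1$ are the $m$-th roots of $-1$, namely $z_k=e^{i(2k+1)\pi/m}$ for $k=0,1,\dots,m-1$, so that a conjugate pair $z_k,\overline{z_k}$ contributes a real quadratic factor
\begin{equation*}
(z-z_k)(z-\overline{z_k})=z^2-2\varphi_k z+1,\qquad \varphi_k=\cos\!\left(\frac{(2k+1)\pi}{m}\right).
\end{equation*}
The parity of $m$ controls how many real roots occur: when $m$ is even no root equals $\pm1$, so all $m$ roots pair up into $m/2$ quadratic factors indexed by $k=0,\dots,m/2-1$; when $m$ is odd exactly one root is real, namely $z=-1$ (occurring at $k=(m-1)/2$), which splits off a single linear factor $z+1$, leaving $(m-1)/2$ quadratic factors indexed by $k=0,\dots,(m-3)/2$. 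In either case the cosines $\varphi_k$ over the stated index range are distinct because the angles $(2k+1)\pi/m$ lie in $(0,\pi)$, where cosine is injective.

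Next I would substitute $z=(1-x)/x$ into each factor and clear denominators. A short computation using $(1-x)^2+x^2=1-2x(1-x)$ gives, for the quadratic factors,
\begin{equation*}
z^2-2\varphi_k z+1=\frac{(1-x)^2-2\varphi_k x(1-x)+x^2}{x^2}=\frac{1-2x(1-x)(\varphi_k+1)}{x^2},
\end{equation*}
while the linear factor in the odd case simplifies to $z+1=1/x$. Taking logarithms of the product and recording the powers of $x$ in the denominators is then the whole game: each quadratic factor contributes $\log\big[1-2x(1-x)(\varphi_k+1)\big]-2\log x$, and (in the odd case) the linear factor contributes $-\log x$. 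Summing, the constant-in-$k$ logarithm terms assemble into $-2\cdot(m/2)\log x=-m\log x$ in the even case and $-2\cdot((m-1)/2)\log x-\log x=-m\log x$ in the odd case, which yields (\ref{e:11}) and (\ref{e:12}) respectively; the convention that $\sum_{k=0}^{-1}(\cdot)=0$ covers the base case $m=1$.

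The only point requiring genuine care, rather than bookkeeping, is the legitimacy of passing to real logarithms. Here I would observe that each quadratic $z^2-2\varphi_k z+1$ has discriminant $4(\varphi_k^2-1)<0$, since the relevant angles are strictly interior to $(0,\pi)$ and hence $|\varphi_k|<1$; thus each quadratic is strictly positive for all real $z$, and after the substitution $1-2x(1-x)(\varphi_k+1)=x^2\big(z^2-2\varphi_k z+1\big)>0$ for every $x\neq0$. This guarantees that all the logarithms on the right-hand side are well defined for nonzero real $x$, so the factorization identity may be turned into the claimed logarithmic identity term by term. The main obstacle is therefore less an analytic difficulty than the need to split the argument cleanly by the parity of $m$ and to track the accumulated $-m\log x$ correctly; everything else is the standard real factorization of $z^m+1$.
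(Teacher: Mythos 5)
Your proof is correct and takes essentially the same route as the paper: both factor $z^m+1$ into conjugate-pair quadratic factors (plus the linear factor $z+1$ when $m$ is odd), substitute $z=(1-x)/x$, and take logarithms, with the per-factor $-2\log x$ terms assembling into $-m\log x$. Your additional check that each quadratic $z^2-2\varphi_k z+1$ has negative discriminant, so every logarithm on the right-hand side is applied to a positive quantity, is a small refinement of rigor that the paper's proof leaves implicit.
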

\begin{proof}
Let $m$ be a positive even integer. Then for any $z\in\mathbb{C}$ we have

\begin{equation*}
z^m+1=\prod_{k=0}^{\frac{m}{2}-1}\left(z-e^{\frac{\pi i(2k+1)}{m}}\right)\left(z-e^{-\frac{\pi i(2k+1)}{m}}\right).
\end{equation*}
Setting $z=\frac{1-x}{x}$ $(x\neq 0)$, and taking the logarithm of both sides,  we get, after a little simplification,
\begin{align*}
&\log\left(\left(\frac{1-x}{x}\right)^m+1\right)\\
&=\sum_{k=0}^{\frac{m}{2}-1}\log\bigg[\bigg(\frac{1-x}{x}-\cos\frac{(2k+1)\pi}{m}\bigg)^2+\sin^2\frac{(2k+1)\pi}{m}\bigg]\\
&=\sum_{k=0}^{\frac{m}{2}-1}\log\left(\frac{(1-x)^2-2x(1-x)\varphi_k+x^2}{x^2}\right)\\
&=\sum_{k=0}^{\frac{m}{2}-1}\log\left[1-2x(1-x)(\varphi_k+1)\right]-m\log x.
\end{align*}
Now if $m$ is an odd positive integer, then we have
\begin{equation*}
z^m+1=(z+1)\prod_{k=0}^{\frac{m-3}{2}}\left(z-e^{\frac{\pi i(2k+1)}{m}}\right)\left(z-e^{-\frac{\pi i(2k+1)}{m}}\right).
\end{equation*}
Now we set $z=\frac{1-x}{x}$ $(x\neq 0)$  here and then we take the logarithm of both sides to get

\begin{align*}
&\log\left(\left(\frac{1-x}{x}\right)^m+1\right)=-\log x\\
&+\sum_{k=0}^{\frac{m-3}{2}}\log\bigg[\bigg(\frac{1-x}{x}-\varphi_k\bigg)^2+\sin^2\frac{(2k+1)\pi}{m}\bigg]\\
&=-\log x+\sum_{k=0}^{\frac{m-3}{2}}\log\left(\frac{(1-x)^2-2x(1-x)\varphi_k+x^2}{x^2}\right)\\
&=\sum_{k=0}^{\frac{m-3}{2}}\log\left[1-2x(1-x)(\varphi_k+1)\right]-m\log x
\end{align*}
as required.
\end{proof}

\section{Evaluation of some families of logarithmic integrals}
In this section we calculate a large class of definite integrals over the unit interval $[0,1]$ and $[0,\infty)$. Applications of these results lead to many Euler-BBP-like formulas, as presented in the next section.
\begin{thm}\label{Theorem1}Let $q>1$ be any real number. Then we have
\begin{align}\label{e:14}
&\int_{0}^{1}\frac{\left(x^{q-2}+1\right)\log\left(x^{mq}+1\right)}{x^q+1}{\rm d }x=\frac{m}{4q}\left(\psi^\prime\left(\frac{2q-1}{2q}\right)-\psi^\prime\left(\frac{q-1}{2q}\right)\right)\notag\\
&-\frac{1}{q}\sum_{k=0}^{\frac{m}{2}-1}\sum_{p=1}^{\infty}\frac{2^p(\varphi_k+1)^p}{p}\frac{\Gamma\left(p+\frac{1}{q}\right)\Gamma\left(p+1-\frac{1}{q}\right)}{(2p)!}\notag\\
&-\frac{m\pi}{q\sin\frac{\pi}{q}}\left(\psi\left(1-\frac{1}{q}\right)-\psi(1)\right)
\end{align}
if $m\geq 2$ is an even integer, and

\begin{align}\label{e:15}
&\int_{0}^{1}\frac{\left(x^{q-2}+1\right)\log\left(x^{mq}+1\right)} {x^q+1}{\rm d }x=\frac{m}{4q}\left(\psi^\prime\left(\frac{2q-1}{2q}\right)-\psi^\prime\left(\frac{q-1}{2q}\right)\right)\notag\\
&-\frac{1}{q}\sum_{k=0}^{\frac{m-3}{2}}\sum_{p=1}^{\infty}\frac{2^p(\varphi_k+1)^p}{p}\frac{\Gamma\left(p+\frac{1}{q}\right)\Gamma\left(p+1-\frac{1}{q}\right)}{(2p)!}\notag\\
&-\frac{m\pi}{q\sin\frac{\pi}{q}}\left(\psi\left(1-\frac{1}{q}\right)-\psi(1)\right)
\end{align}
if $m\geq 1$ is an odd integer. Here $\varphi_k$ is as defined by (\ref{e:13}) and the sum $\sum_{k=0}^{-1}(\cdot)$ is taken to be vanish.
\end{thm}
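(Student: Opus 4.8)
The plan is to combine the factorization of Lemma~\ref{Lemma} with the integral representation of the beta function. Since the numerator is $x^{q-2}+1$, I would first split
\[
I:=\int_{0}^{1}\frac{(x^{q-2}+1)\log(x^{mq}+1)}{x^{q}+1}\,{\rm d}x=\int_{0}^{1}\frac{x^{q-2}\log(x^{mq}+1)}{x^{q}+1}\,{\rm d}x+\int_{0}^{1}\frac{\log(x^{mq}+1)}{x^{q}+1}\,{\rm d}x,
\]
and apply $x\mapsto 1/x$ to the first piece. Using $\log(x^{-mq}+1)=\log(x^{mq}+1)-mq\log x$, this turns the first integral into one over $[1,\infty)$, and after recombining with the second I expect the clean reduction
\[
I=\int_{0}^{\infty}\frac{\log(1+x^{mq})}{1+x^{q}}\,{\rm d}x-mq\int_{1}^{\infty}\frac{\log u}{1+u^{q}}\,{\rm d}u.
\]
This symmetrization is the conceptual heart of the argument: it is exactly what is needed so that a single substitution later sweeps out all of $(0,1)$ in the new variable, whereas a direct substitution in the original $[0,1]$ integral would only cover half the range.

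Next I would treat the $[0,\infty)$ integral with the substitution $t=1/(1+x^{q})$, equivalently $x^{q}=(1-t)/t$. Since $1+x^{q}=1/t$ and ${\rm d}x=-\tfrac{1}{q}t^{-1/q-1}(1-t)^{1/q-1}\,{\rm d}t$, this yields
\[
\int_{0}^{\infty}\frac{\log(1+x^{mq})}{1+x^{q}}\,{\rm d}x=\frac{1}{q}\int_{0}^{1}t^{-1/q}(1-t)^{1/q-1}\log\!\left[\left(\frac{1-t}{t}\right)^{m}+1\right]{\rm d}t,
\]
which is precisely the shape to which Lemma~\ref{Lemma} applies. Substituting the factorization \eqref{e:11} (even $m$) or \eqref{e:12} (odd $m$) splits this into a sum of integrals of $t^{-1/q}(1-t)^{1/q-1}\log[1-2t(1-t)(\varphi_k+1)]$ and a single multiple of $t^{-1/q}(1-t)^{1/q-1}\log t$. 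Because the $-m\log t$ term is common to both parities, the two cases differ only in the range of the $k$-sum and can be handled simultaneously.

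For the quadratic logarithmic terms I would expand $\log(1-y)=-\sum_{p\geq1}y^{p}/p$ with $y=2t(1-t)(\varphi_k+1)$ and integrate termwise; each term is a beta integral $B(p+1-\tfrac1q,p+\tfrac1q)=\Gamma(p+1-\tfrac1q)\Gamma(p+\tfrac1q)/(2p)!$, producing exactly the double sum in \eqref{e:14}--\eqref{e:15}. For the $\log t$ term I would differentiate $B(a,b)$ in its first argument to get $\int_0^1 t^{-1/q}(1-t)^{1/q-1}\log t\,{\rm d}t=B(1-\tfrac1q,\tfrac1q)\big(\psi(1-\tfrac1q)-\psi(1)\big)$ and simplify $B(1-\tfrac1q,\tfrac1q)=\pi/\sin(\pi/q)$ via the reflection formula \eqref{e:4}, which gives the third term. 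Finally, for the correction integral I would send $u\mapsto1/u$ and expand $1/(1+u^{q})$ geometrically, so that $\int_1^\infty \log u/(1+u^{q})\,{\rm d}u=\sum_{n\geq0}(-1)^n/(nq+q-1)^2$; splitting by the parity of $n$ and using $\psi'(s)=\sum_{n\geq0}(n+s)^{-2}$ identifies this with $\tfrac{1}{4q^2}\big(\psi'(\tfrac{q-1}{2q})-\psi'(\tfrac{2q-1}{2q})\big)$, and multiplication by $-mq$ reproduces the first term.

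The main obstacle is not any single evaluation but the justification of the two termwise interchanges of summation and integration. For the $\sum_p$ interchange, one checks $0\le 2t(1-t)(\varphi_k+1)<1$ on $(0,1)$ --- strict because $\varphi_k<1$ for every index that occurs --- so all summands carry a fixed sign and Tonelli's theorem applies; convergence of the resulting double series then follows from the asymptotics $B(p+1-\tfrac1q,p+\tfrac1q)=O(p^{-1/2}4^{-p})$ against the geometric factor $(2(\varphi_k+1))^{p}$ with $2(\varphi_k+1)<4$. One must also verify integrability near $t=0,1$, where $t^{-1/q}$ and $\log t$ are singular, but the exponents $-1/q>-1$ keep every integral convergent.
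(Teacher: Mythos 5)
Your proposal is correct and follows essentially the same route as the paper: inversion $x\mapsto 1/x$ to pass to $[0,\infty)$ plus a logarithmic correction, the substitution $t=1/(1+x^q)$ to reach the form where Lemma \ref{Lemma} applies, termwise beta-integral evaluation of the expanded logarithms, the beta-derivative/reflection-formula treatment of the $\log t$ term, and geometric expansion with polygamma identification for the tail integral. The only (cosmetic) difference is that you invert the $x^{q-2}$ piece instead of the other one, which merges the paper's separate integrals $B$ and $C$ into a single correction term and lets $\psi(1-\tfrac1q)-\psi(1)$ appear directly rather than after recombination.
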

\begin{proof}Let $m\geq 2$ be an even integer. Making the change of variable $x=\frac{1}{u}$ yields
\begin{align*}
\int_{0}^{1}\frac{\log\left(x^{mq}+1\right)}{x^q+1}{\rm d }x&=\int_{1}^{\infty}\frac{u^{q-2}\log\left(u^{mq}+1\right)}{u^q+1}{\rm d}u\\
&-mq\int_{1}^{\infty}\frac{u^{q-2}\log x}{u^q+1}{\rm d}u.
\end{align*}
We can rewrite this as follows

\begin{align}\label{e:16}
&\int_{0}^{1}\frac{\left(x^{q-2}+1\right)\log\left(x^{mq}+1\right)}{x^q+1}{\rm d }x=\underbrace{\int_{0}^{\infty}\frac{u^{q-2}\log\left(u^{mq}+1\right)}{u^q+1}{\rm d }u}_{A}\notag\\
&-mq\underbrace{\int_{0}^{\infty}\frac{u^{q-2}\log u}{u^q+1}{\rm d }u}_B+mq\underbrace{\int_{0}^{1}\frac{u^{q-2}\log u}{u^q+1}{\rm d }u}_C.
\end{align}
First we evaluate $A$. Making the substitution $\frac{1}{u^q+1}=t$ or $u=\left(\frac{1-t}{t}\right)^{\frac{1}{q}}$, we deduce that
\begin{align} \label{e:17}
A=\frac{1}{q}\int_{0}^{1}(1-t)^{-\frac{1}{q}}t^{\frac{1}{q}-1}\log\left(\left(\frac{1-t}{t}\right)^m+1\right){\rm d }t.
\end{align}
Now applying (\ref{e:11}) gives
\begin{align}\label{e:18}
A&=\frac{1}{q}\sum_{k=0}^{\frac{m-2}{2}}\int_{0}^{1}(1-t)^{-\frac{1}{q}}t^{\frac{1}{q}-1}\log\left(1-2t(1-t)(\varphi_k+1)\right){\rm d }t\notag\\
&-\frac{m}{q}\int_{0}^{1}(1-t)^{-\frac{1}{q}} t^{\frac{1}{q}-1}\log t{\rm d }t\notag\\
&=-\frac{1}{q}\underbrace{\sum_{k=0}^{\frac{m-2}{2}}\int_{0}^{1}(1-t)^{-\frac{1}{q}}t^{\frac{1}{q}-1}\left(\sum_{n=1}^{\infty}\frac{[2(\varphi_k+1)]^n[t(1-t)]^n}{n}\right){\rm d}t}_{A_1}\notag\\
&-\frac{m}{q}\underbrace{\int_{0}^{1}(1-t)^{-\frac{1}{q}} t^{\frac{1}{q}-1}\log t{\rm d }t}_{A_2}.
\end{align}
Since,  for any $n\in\mathbb{N}$ and $t\in (0,1)$, $\sum_{\upsilon=1}^n\frac{[2(\varphi_k+1)]^\upsilon[t(1-t)]^\upsilon}{\upsilon}\leq H_n$, the dominated convergence theorem justifies interchanging the  order of summation and integration. Thus, we have

\begin{align}\label{e:19}
A_1&=-\sum_{k=0}^{\frac{m-2}{2}}\sum_{p=1}^{\infty}\frac{2^p(\varphi_k+1)^p}{p}\int_{0}^{1}(1-t)^{p-\frac{1}{q}}t^{p+\frac{1}{q}-1}{\rm d }t.
\end{align}
From the gamma-beta functional relation (\ref{e:7}) we have that
\begin{align*}
\int_{0}^{1}(1-t)^{p-\frac{1}{q}}t^{p+\frac{1}{q}-1}{\rm d }t=\frac{\Gamma\left(p+\frac{1}{q}\right)\Gamma\left(p+1-\frac{1}{q}\right)}{(2p)!}.
\end{align*}
Using this in (\ref{e:19}) we deduce

\begin{align}\label{e:20}
A_1&=-\sum_{k=0}^{\frac{m-2}{2}}\sum_{p=1}^{\infty}\frac{2^p(\varphi_k+1)^p}{p}\frac{\Gamma\left(p+\frac{1}{q}\right)\Gamma\left(p+1-\frac{1}{q}\right)}{(2p)!}.
\end{align}
Proceeding as we did before, after a simple calculation, we see that
\begin{align*}
A_2&=-\int_{0}^{1}\frac{d}{ds}t^{s}\bigg|_{s=1/q-1}(1-t)^{-1/q}{\rm d }t=\frac{d}{ds}B(s+1,1-1/q)\bigg|_{s=1/q-1},
\end{align*}
where the interchanging the order of integration and differentiation can be justified by Lemma 2.1 from \cite{12}, and  $B(.,.)$ is the beta function. From the gamma-beta functional equation (\ref{e:7}) we get
\begin{align*}
A_2&=\frac{d}{ds}\frac{\Gamma(s+1)\Gamma\left(1-\frac{1}{q}\right)}{\Gamma\left(s+2-\frac{1}{q}\right)}\bigg|_{s=\frac{1}{q}-1}=\Gamma\left(\frac{1}{q}\right)\Gamma\left(1-\frac{1}{q}\right)(\psi(1/q)-\psi(1)).
\end{align*}
By the reflection formula for the gamma function this becomes
\begin{align}\label{e:21}
A_2=\frac{\pi}{\sin \frac{\pi}{q}}(\psi(1/q)-\psi(1)).
\end{align}
Combining (\ref{e:18}), (\ref{e:20}) and (\ref{e:21}) we arrive at
\begin{align}\label{e:22}
A&=-\frac{1}{q}\sum_{k=0}^{\frac{m-2}{2}}\sum_{p=1}^{\infty}\frac{2^p(\varphi_k+1)^p}{p}\frac{\Gamma\left(p+\frac{1}{q}\right)\Gamma\left(p+1-\frac{1}{q}\right)}{(2p)!}   \notag\\
&-\frac{m\pi}{q\sin \frac{\pi}{q}}(\psi(1/q)-\psi(1)).
\end{align}
Making the same change of variable $u=\left(\frac{1-t}{t}\right)^{\frac{1}{q}}$ again we find
\begin{equation*}
B=\frac{1}{q^2}\int_{0}^{1}\frac{\left(\frac{1-t}{t}\right)^{1-2/q}\log\left(\frac{1-t}{t}\right)}{t}\left(\frac{1-t}{t}\right)^{1/q-1}{\rm d }t.
\end{equation*}
We can simplify this to
\begin{align*}
B&= \frac{1}{q^2}\int_{0}^{1}t^{-1/q}(1-t)^{1/q-1}\log t {\rm d }t-\frac{1}{q^2}\int_{0}^{1}t^{1/q-1}(1-t)^{-1/q}\log t {\rm d }t\\
&=\frac{1}{q^2}\frac{d}{ds}\int_{0}^{1}t^{s}(1-t)^{1/q-1} {\rm d }t\bigg|_{s=-\frac{1}{q}}-\frac{1}{q^2}\frac{d}{ds}\int_{0}^{1}t^{s}(1-t)^{-1/q}{\rm d }t\bigg|_{s=\frac{1}{q}-1} .
\end{align*}
Employing the gamma-beta function identity we can write this as follows
\begin{align*}
B&=\frac{1}{q^2}\frac{d}{ds}\frac{\Gamma(s+1)\Gamma(1/q)}{\Gamma(s+1+1/q)}\bigg|_{s=-1/q}\\
&-\frac{1}{q^2}\frac{d}{ds}\frac{\Gamma(s+1)\Gamma(1-1/q)}{\Gamma(s+2-1/q)}\bigg|_{s=1/q-1}
\end{align*}
or
\begin{align*}
B&=\frac{\Gamma(1/q)\Gamma(1-1/q)}{q^2}\left[\psi(1-1/q)-\psi(1/q)\right].
\end{align*}
Using the duplication formulas for the gamma  function we find
\begin{equation}\label{e:23}
B=\frac{\pi}{q^2\sin(\pi/q)}\left[\psi(1-1/q)-\psi(1/q)\right].
\end{equation}
Finally we shall evaluate $C$. Expanding the log--function in the integral defining $C$ in (\ref{e:16}) to its Maclaurin series
we get
\begin{align*}
C&=\int_{0}^{1}u^{q-2}\log u\sum_{k=0}^{\infty}(-1)^ku^{qk}{\rm d }u.
\end{align*}
This series is uniformly convergent, so, we can interchange the order of integration and summation, and
\begin{align*}
C=\sum_{k=0}^{\infty}(-1)^k\int_{0}^{1}u^{q+qk-2}\log u{\rm d }u=\sum_{k=1}^{\infty}\frac{(-1)^{k}}{(qk-1)^2}.
\end{align*}
But since
\begin{align*}
\sum_{k=1}^{\infty}\frac{(-1)^{k}}{(qk-1)^2}=\frac{1}{4q^2}\left[\psi^\prime\left(\frac{2q-1}{2q}\right)-\psi^\prime\left(\frac{q-1}{2q}\right)\right]
\end{align*}
we can get
\begin{align}\label{e:24}
C=\frac{1}{4q^2}\left[\psi^\prime\left(\frac{2q-1}{2q}\right)-\psi^\prime\left(\frac{q-1}{2q}\right)\right].
\end{align}
Replacing the quantities given in (\ref{e:22}), (\ref{e:23}), and (\ref{e:24}) in (\ref{e:16}) we arrive at \eqref{e:14}. The proof of (\ref{e:15}) can be done  very similarly to that of (\ref{e:14}) by employing formula (\ref{e:12}).
\end{proof}
Setting $m=1$ in (\ref{e:15}) we have the following corollary:
\begin{cor}\label{Corollary1}For any real number with $q>1$ we have
\begin{align}\label{e:25}
\int_{0}^{1}\frac{\left(x^{q-2}+1\right)\log\left(x^{q}+1\right)}{x^q+1}{\rm d }x&=\frac{1}{4q}\left[\psi^\prime\left(\frac{2q-1}{2q}\right)-\psi^\prime\left(\frac{q-1}{2q}\right)\right]\notag\\
&-\frac{\pi}{q\sin\frac{\pi}{q}}\left(\psi\left(1-\frac{1}{q}\right)-\psi(1)\right).
\end{align}
\end{cor}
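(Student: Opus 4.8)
The plan is to read off (\ref{e:25}) as the case $m=1$ of Theorem \ref{Theorem1}. Since $1$ is odd, the relevant formula is (\ref{e:15}) rather than (\ref{e:14}). The first thing I would check is the range of the inner double sum: for $m=1$ its upper index is $\frac{m-3}{2}=-1$, so the sum $\sum_{k=0}^{-1}(\cdot)$ is empty and, by the convention fixed in Lemma \ref{Lemma} and restated in the theorem, contributes nothing. The entire Euler-type series in (\ref{e:15}) therefore vanishes, and substituting $m=1$ into the two surviving terms turns the prefactor $\frac{m}{4q}$ into $\frac{1}{4q}$ and $\frac{m\pi}{q\sin(\pi/q)}$ into $\frac{\pi}{q\sin(\pi/q)}$. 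On the left-hand side $mq=q$ reduces $\log(x^{mq}+1)$ to $\log(x^{q}+1)$, so what remains is exactly (\ref{e:25}).

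If one instead wanted a self-contained derivation, I would retrace the proof of Theorem \ref{Theorem1} with $m=1$, where the simplification is already visible at the level of Lemma \ref{Lemma}. Indeed, formula (\ref{e:12}) with $m=1$ collapses to $\log\big(\frac{1-x}{x}+1\big)=-\log x$, with none of the factors $\log[1-2x(1-x)(\varphi_k+1)]$ present. Hence the quantity $A_1$ appearing in (\ref{e:18}) is identically zero, and the contribution $A$ in (\ref{e:22}) reduces to $-\frac{\pi}{q\sin(\pi/q)}(\psi(1/q)-\psi(1))$, while $B$ and $C$ are unchanged from (\ref{e:23}) and (\ref{e:24}). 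Forming the $m=1$ instance $A-q\,B+q\,C$ of (\ref{e:16}), the $C$-term gives $\frac{1}{4q}[\psi^\prime(\frac{2q-1}{2q})-\psi^\prime(\frac{q-1}{2q})]$, and the first two combine with the $\psi(1/q)$ contributions cancelling to leave $-\frac{\pi}{q\sin(\pi/q)}(\psi(1-1/q)-\psi(1))$, reproducing (\ref{e:25}).

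Both routes are purely mechanical, so there is no genuine analytic obstacle; the only point that demands care is the bookkeeping around the empty sum. The main thing to verify is that the convention is invoked correctly at the smallest admissible odd value $m=1$, i.e.\ that $\frac{m-3}{2}=-1$ really does make the inner sum vacuous---this is precisely what eliminates the series and leaves the clean closed form of (\ref{e:25}).
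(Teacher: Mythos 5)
Your proposal is correct and follows exactly the paper's own route: the paper derives (\ref{e:25}) precisely by setting $m=1$ in (\ref{e:15}), with the empty-sum convention $\sum_{k=0}^{-1}(\cdot)=0$ killing the double series. Your additional self-contained retracing of Theorem \ref{Theorem1} (noting $A_1=0$ and the cancellation of the $\psi(1/q)$ terms in $A-qB$) is a sound, if redundant, verification of the same computation.
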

\begin{cor}\label{Corollary2}For all  $q>1$ real numbers we have
\begin{align*}
\int_{0}^{1}\frac{\left(x^{q-2}+1\right)\log\left(x^{2q}+1\right)}{x^q+1}{\rm d }x&=-\frac{2\pi}{q\sin(\pi/q)}\left[\psi(1-1/q)-\psi(1)\right]\\
&+\frac{1}{2q}\left[\psi^\prime\left(\frac{2q-1}{2q}\right)-\psi^\prime\left(\frac{q-1}{2q}\right)\right]\\
&-\frac{1}{q}\sum_{k=1}^{\infty}\frac{2^k}{k}\frac{\Gamma\left(k+\frac{1}{q}\right)\Gamma\left(k+1-\frac{1}{q}\right)}{(2k)!}.
\end{align*}
\end{cor}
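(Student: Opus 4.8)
The plan is to obtain this identity as the special case $m=2$ of Theorem~\ref{Theorem1}. Since $m=2$ is an even integer, I would invoke formula~\eqref{e:14} rather than~\eqref{e:15}. The immediate simplification comes from observing that the outer summation index runs over $0 \le k \le \frac{m}{2}-1 = 0$, so the double sum degenerates to its single $k=0$ term.

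The one computation that requires genuine attention is the evaluation of $\varphi_0$. From the definition~\eqref{e:13} with $m=2$ I find $\varphi_0 = \cos\!\left(\frac{\pi}{2}\right) = 0$, whence $\varphi_0 + 1 = 1$. Consequently the factor $2^p(\varphi_0+1)^p$ collapses to $2^p$, and the middle term of~\eqref{e:14} becomes exactly $-\frac{1}{q}\sum_{p=1}^{\infty}\frac{2^p}{p}\frac{\Gamma(p+1/q)\,\Gamma(p+1-1/q)}{(2p)!}$, which is the power-series contribution appearing in the corollary.

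It then remains only to simplify the two remaining terms at $m=2$: the leading coefficient reduces via $\frac{m}{4q} = \frac{2}{4q}=\frac{1}{2q}$, while the final term becomes $-\frac{2\pi}{q\sin(\pi/q)}\bigl(\psi(1-1/q)-\psi(1)\bigr)$. Collecting these three contributions and relabeling the summation variable $p$ as $k$ reproduces the asserted formula (the reordering of the three terms relative to~\eqref{e:14} is cosmetic). Because this is a direct substitution into a theorem already established, there is no substantive obstacle; the sole point of care is the correct value $\varphi_0 = 0$, which is precisely what makes the surviving power series collapse to the clean form stated.
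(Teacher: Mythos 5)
Your proposal is correct and follows exactly the paper's own proof, which likewise obtains the corollary by setting $m=2$ in \eqref{e:14}; your additional verification that $\varphi_0=\cos(\pi/2)=0$ (so the $k$-sum collapses to a single term with $2^p(\varphi_0+1)^p=2^p$) is precisely the detail the paper leaves implicit. Nothing further is needed.
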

\begin{proof}
The proof follows from (\ref{e:14}) by putting $m=2$.
\end{proof}

\begin{cor}\label{Corollary3} For all $q>1$ reals we have
\begin{align*}
\int_{0}^{1}\frac{\left(x^{q-2}+1\right)\log\left(x^{3q}+1\right)}{x^q+1}{\rm d }x&=\frac{3}{4q}\left(\psi^\prime\left(\frac{2q-1}{2q}\right)-\psi^\prime\left(\frac{q-1}{2q}\right)\right)\\
&-\frac{1}{q}\sum_{k=1}^{\infty}\frac{3^k}{k}\frac{\Gamma\left(k+\frac{1}{q}\right)\Gamma\left(k+1-\frac{1}{q}\right)}{(2k)!}\\
&-\frac{3\pi}{q\sin\frac{\pi}{q}}\left(\psi(1-1/q)-\psi(1)\right).
\end{align*}
\end{cor}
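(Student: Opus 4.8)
The plan is to obtain this corollary as the special case $m=3$ of Theorem~\ref{Theorem1}, so that no new analysis is required. Since $3$ is an \emph{odd} integer, the relevant formula is (\ref{e:15}) rather than (\ref{e:14}). First I would substitute $m=3$ into the right-hand side of (\ref{e:15}) and then simplify each of its three pieces in turn.

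The only piece needing genuine attention is the double sum. Its outer summation index runs up to $\frac{m-3}{2}=0$, so the sum over $k$ collapses to the single term $k=0$. The key computation is then the value of $\varphi_0$: from (\ref{e:13}) with $m=3$ we have $\varphi_0=\cos\frac{\pi}{3}=\frac12$, whence $2(\varphi_0+1)=3$ and therefore $2^p(\varphi_0+1)^p=3^p$. After relabeling the inner index $p$ as $k$, the double sum reduces to the single series $\sum_{k=1}^{\infty}\frac{3^k}{k}\frac{\Gamma(k+1/q)\Gamma(k+1-1/q)}{(2k)!}$ appearing in the statement, with its prefactor $-\frac{1}{q}$.

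The remaining two terms require nothing beyond inserting $m=3$. The polygamma term becomes $\frac{3}{4q}\bigl(\psi'(\tfrac{2q-1}{2q})-\psi'(\tfrac{q-1}{2q})\bigr)$, and the reflection term becomes $-\frac{3\pi}{q\sin(\pi/q)}\bigl(\psi(1-1/q)-\psi(1)\bigr)$. Assembling these three contributions yields precisely the claimed identity.

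I do not anticipate any real obstacle, since the change of variables, the term-by-term integration justified by dominated convergence, and the evaluation of the beta integrals have all been dispatched in the proof of Theorem~\ref{Theorem1}. The single point demanding a moment's care is the trigonometric evaluation $\cos(\pi/3)=\frac12$ together with the bookkeeping that turns $2^p(\varphi_0+1)^p$ into $3^p$; getting this factor right is exactly what produces the base $3$ in the series, mirroring the way $\cos(\pi/2)=0$ produces the base $2$ in Corollary~\ref{Corollary2}.
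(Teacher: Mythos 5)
Your proposal is correct and takes exactly the paper's route: the paper's proof of Corollary~\ref{Corollary3} is precisely ``put $m=3$ in (\ref{e:15})'', and your verification that the outer sum collapses to the single index $k=0$ with $\varphi_0=\cos(\pi/3)=\tfrac12$, so that $2^p(\varphi_0+1)^p=3^p$, supplies the only detail the paper leaves implicit.
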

\begin{proof}
The proof follows from (\ref{e:15}) by putting $m=3$.
\end{proof}
\begin{thm}\label{Theorem2.4}
For any even integer $m\geq 2$, we have
\begin{align}\label{e:26}
&\int_0^1 \frac{\log \left(x^{2m}+1\right)}{x^2+1}{\rm d }x=\frac{m\pi}{4}\log 2-mG+\frac{\pi}{2}\sum_{k=0}^{\frac{m}{2}-1}\log\left(1+\sqrt{\frac{1-\varphi_k}{2}}\right),
\end{align}
and if $m\geq 1$ is any odd integer
\begin{align}\label{e:27}
\int_0^1 \frac{\log \left(x^{2 m}+1\right)}{x^2+1}{\rm d }x&=\frac{(m+1)\pi}{4}\log 2-mG\notag\\
&+\frac{\pi}{2}\sum_{k=0}^{\frac{m-3}{2}}\log\left(1+\sqrt{\frac{1-\varphi_k}{2}}\right),
\end{align}
where $\varphi_k=\cos\left(\frac{(2k+1)\pi}{m}\right)$ and $G$ is the Catalan constant.
\end{thm}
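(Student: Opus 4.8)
The plan is to read off Theorem \ref{Theorem2.4} as the special case $q=2$ of Theorem \ref{Theorem1}. When $q=2$ we have $x^{q-2}=x^0=1$, so the integrand in \eqref{e:14} and \eqref{e:15} collapses to $2\log\left(x^{2m}+1\right)/(x^2+1)$, giving
\begin{equation*}
\int_0^1\frac{\log\left(x^{2m}+1\right)}{x^2+1}{\rm d}x=\frac{1}{2}\left.\int_0^1\frac{\left(x^{q-2}+1\right)\log\left(x^{mq}+1\right)}{x^q+1}{\rm d}x\right|_{q=2}.
\end{equation*}
Hence it suffices to evaluate the right-hand sides of \eqref{e:14} (even $m$) and \eqref{e:15} (odd $m$) at $q=2$, term by term, and divide by $2$. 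The two cases differ only in the range of the $k$-summation, so I would carry out the even case fully and indicate the minor change for the odd case.

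First I would dispose of the two terms not containing the series. For the polygamma term, at $q=2$ it equals $\frac{m}{8}\left(\psi'\left(\frac34\right)-\psi'\left(\frac14\right)\right)$; writing $\psi'(s)=\sum_{n\ge0}(n+s)^{-2}$ one finds $\psi'\left(\frac14\right)-\psi'\left(\frac34\right)=16\sum_{n\ge0}(-1)^n(2n+1)^{-2}=16G$, so this term contributes $-2mG$. For the reflection term, $\sin(\pi/q)=\sin(\pi/2)=1$, and by \eqref{e:8} together with $\psi\left(\frac12\right)=-\gamma-2\log2$ we have $\psi\left(1-\frac1q\right)-\psi(1)=-2\log2$; thus $-\frac{m\pi}{q\sin(\pi/q)}\left(\psi(1-\frac1q)-\psi(1)\right)$ becomes $m\pi\log2$.

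The crux is the double series. At $q=2$ the duplication formula \eqref{e:5} gives $\Gamma\left(p+\frac12\right)=\frac{(2p)!\sqrt\pi}{4^pp!}$, whence $\frac{\Gamma(p+\frac12)\Gamma(p+\frac12)}{(2p)!}=\frac{\pi}{16^p}\binom{2p}{p}$, and the inner $p$-sum becomes a central-binomial generating function,
\begin{equation*}
\sum_{p=1}^{\infty}\frac{1}{p}\binom{2p}{p}x^p=-2\log\left(\frac{1+\sqrt{1-4x}}{2}\right),
\end{equation*}
evaluated at $x=\frac{\varphi_k+1}{8}$, where $1-4x=\frac{1-\varphi_k}{2}$. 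Consequently the $k$-th summand equals $-2\log\left(1+\sqrt{\frac{1-\varphi_k}{2}}\right)+2\log2$, and, carrying the prefactor $-\frac12$ and the $\pi$, the whole double series turns into $\pi\sum_k\log\left(1+\sqrt{\frac{1-\varphi_k}{2}}\right)$ together with a multiple of $\log2$. Collecting the three contributions and halving then yields \eqref{e:26} and \eqref{e:27}.

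I expect the main obstacle to be two-fold: first, recognizing the closed form of the central-binomial series above (and noting that its termwise use is legitimate, the requisite convergence having already been secured in the proof of Theorem \ref{Theorem1}); and second, the careful accounting of the $\log2$ coefficients. The latter is exactly where the even/odd distinction enters, since the number of summands is $\frac{m}{2}$ for even $m$ but $\frac{m-1}{2}$ for odd $m$; combined with the $m\pi\log2$ from the reflection term, this produces the coefficient $\frac{m\pi}{4}\log2$ in \eqref{e:26} and $\frac{(m+1)\pi}{4}\log2$ in \eqref{e:27}.
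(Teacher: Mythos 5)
Your proposal is correct and follows essentially the same route as the paper: specialize Theorem \ref{Theorem1} to $q=2$, evaluate the polygamma and reflection terms via $\psi'(1/4)-\psi'(3/4)=16G$ and $\psi(1/2)-\psi(1)=-2\log 2$, and reduce the double series to the central-binomial sum \eqref{e:29}, with the even/odd count of $k$-terms producing the differing $\log 2$ coefficients. The only difference is cosmetic (and arguably an improvement): you justify \eqref{e:29} by the classical generating function $\sum_{p\geq 1}\frac{1}{p}\binom{2p}{p}x^p=-2\log\bigl(\frac{1+\sqrt{1-4x}}{2}\bigr)$, whereas the paper invokes \emph{Mathematica}.
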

\begin{proof} Let $m\geq 2$ be any even integer. Then setting $q=2$ in (\ref{e:14}), using the duplication formula (\ref{e:4}), and noticing that $\psi^\prime(3/4)-\psi^\prime(1/4)=-16G$, and $\psi(1/2)-\psi(1)=-2\log 2$, we get
\begin{align}\label{e:28}
\int_0^1 \frac{\log \left(x^{2 m}+1\right)}{x^2+1}{\rm d }x&=m\pi\log 2-mG
 -\frac{1}{2}\sum_{k=0}^{\frac{m-3}{2}}\sum_{p=1}^{\infty}\frac{1}{p}\left(\frac{1+\varphi_k}{8}\right)^p\binom{2p}{p}.
\end{align}
\textit{Mathematica }can evaluate this series and it gives us

\begin{align}\label{e:29}
\sum_{p=1}^{\infty}\frac{1}{p}\left(\frac{1+\varphi_k}{8}\right)^p\binom{2p}{p}=2\log 2-2\log\left(1+\sqrt{\frac{1-\varphi_k}{2}}\right).
\end{align}
Applying this  to (\ref{e:28}) the proof follows. In the case of odd m the proof can be done similarly by setting $q=2$ in (\ref{e:15}).
\end{proof}
\begin{thm}\label{Theorem2.5}Let $m\geq 2$ be any even integer. Then
\begin{align}\label{e:30}
&\int_0^1 \frac{(x+1)\log \left(x^{3 m}+1\right)}{x^3+1}{\rm d }x=\frac{m}{12}\left(\psi^\prime(5/6)-\psi^\prime(1/3)\right)-\frac{\pi^2m}{9}\notag\\
&-\frac{\pi m\sqrt{3}}{9}\log 2-\frac{2\pi\sqrt{3}}{9}\bigg\{2\sum_{k=0}^{\frac{m}{2}-1}\log\bigg[\sin\bigg(\frac{1}{3}\sin^{-1}\bigg(\sqrt{\frac{\varphi_k+1}{2}}\bigg)\bigg)\bigg]\notag\\
&-\sum_{k=0}^{\frac{m}{2}-1}\log \bigg[2 \cos \bigg(\frac{2}{3} \sin ^{-1}\bigg(\sqrt{\frac{1+\varphi_k}{2}}\bigg)\bigg)+1\bigg]-\sum_{k=0}^{\frac{m}{2}-1}\log\left(\varphi_k+1\right)\bigg\}.
\end{align}
If $m\geq 1$ is an odd integer

\begin{align}\label{e:31}
&\int_0^1 \frac{(x+1)\log \left(x^{3 m}+1\right)}{x^3+1}{\rm d }x=\frac{m}{12}\left(\psi^\prime(5/6)-\psi^\prime(1/3)\right)-\frac{\pi^2m}{9}\notag\\
&+\frac{\pi\sqrt{3}}{3}\log 3-\frac{\pi(m-1)\sqrt{3}}{9}\log 2\notag\\
&-\frac{2\pi\sqrt{3}}{9}\bigg\{2\sum_{k=0}^{\frac{m-3}{2}}\log\bigg[\sin\bigg(\frac{1}{3}\sin^{-1}\bigg(\sqrt{\frac{\varphi_k+1}{2}}\bigg)\bigg)\bigg]\notag\\
&-\sum_{k=0}^{\frac{m-3}{2}}\log \bigg[2 \cos \bigg(\frac{2}{3} \sin ^{-1}\bigg(\sqrt{\frac{1+\varphi_k}{2}}\bigg)\bigg)+1\bigg]-\sum_{k=0}^{\frac{m-3}{2}}\log\left(1+\varphi_k\right)\bigg\}.
\end{align}
\end{thm}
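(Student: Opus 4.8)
The plan is to recognize Theorem~\ref{Theorem2.5} as the special case $q=3$ of Theorem~\ref{Theorem1}. When $q=3$ one has $x^{q-2}+1=x+1$, $x^{q}+1=x^{3}+1$ and $x^{mq}+1=x^{3m}+1$, so the left-hand side of (\ref{e:30})/(\ref{e:31}) is literally the left-hand side of (\ref{e:14})/(\ref{e:15}). It then remains to put $q=3$ in those formulas and simplify the three groups of terms on the right. The first term becomes $\frac{m}{12}\left(\psi'(5/6)-\psi'(1/3)\right)$ at once, since $\frac{2q-1}{2q}=\frac56$ and $\frac{q-1}{2q}=\frac13$. For the last term, $\sin(\pi/3)=\frac{\sqrt3}{2}$ gives $\frac{m\pi}{q\sin(\pi/q)}=\frac{2m\pi\sqrt3}{9}$, while the digamma factor is the standard value $\psi(2/3)-\psi(1)=\frac{\pi}{2\sqrt3}-\frac32\log3$ (Gauss's digamma theorem, or reflection (\ref{e:9}) together with differentiating (\ref{e:6})); this produces the $-\frac{m\pi^2}{9}$ term and a $\log 3$ contribution that must be tracked.

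The heart of the proof is the inner double series. First I would collapse the gamma quotient using the Gauss multiplication formula (\ref{e:6}) with $k=3$ and $x=p$: from $\Gamma(p+\tfrac13)\Gamma(p+\tfrac23)=\frac{2\pi\,\Gamma(3p)}{3^{3p-1/2}\Gamma(p)}$ and the elementary identity $\frac{(3p-1)!}{(p-1)!\,(2p)!}=\frac13\binom{3p}{p}$ one obtains
\[
\frac{\Gamma\!\left(p+\frac13\right)\Gamma\!\left(p+\frac23\right)}{(2p)!}=\frac{2\pi\sqrt3}{3}\,\frac{1}{27^{\,p}}\binom{3p}{p}.
\]
Hence each inner sum over $p$ reduces to the binomial generating function
\[
g(z):=\sum_{p=1}^{\infty}\frac1p\binom{3p}{p}z^{p},\qquad z=z_k:=\frac{2(\varphi_k+1)}{27},
\]
and the whole middle term of (\ref{e:14})/(\ref{e:15}) becomes $-\frac{2\pi\sqrt3}{9}\sum_k g(z_k)$, which already matches the prefactor of the curly braces in (\ref{e:30})/(\ref{e:31}).

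It then remains to evaluate $g(z)$ in closed form, the step played by \emph{Mathematica} in Theorem~\ref{Theorem2.4} (cf. (\ref{e:29})). I would do it by hand as follows: since $z\,g'(z)=\sum_{p\ge1}\binom{3p}{p}z^{p}={}_2F_1\!\left(\tfrac13,\tfrac23;\tfrac12;\tfrac{27z}{4}\right)-1$ and ${}_2F_1(\tfrac13,\tfrac23;\tfrac12;\sin^2\theta)=\frac{\cos(\theta/3)}{\cos\theta}$, the substitution $\sin^2\theta=\frac{27z}{4}$ turns $g(z)=\int_0^z\frac{{}_2F_1-1}{u}\,du$ into an elementary trigonometric integral. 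Writing $\theta=3\beta$ and using $\cos\beta-\cos3\beta=2\sin2\beta\sin\beta$ together with $\sin3\beta=\sin\beta(1+2\cos2\beta)$ yields the clean antiderivative $g(z)=3\log3-3\log\!\big(1+2\cos(\tfrac23\Theta)\big)$, where $\Theta=\sin^{-1}\sqrt{\frac{\varphi_k+1}{2}}$. Because $z_k$ is chosen so that $\frac{27z_k}{4}=\frac{\varphi_k+1}{2}=\sin^2\Theta_k$, this $\Theta_k$ is exactly the $\sin^{-1}\sqrt{\frac{\varphi_k+1}{2}}$ appearing in the statement.

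The final and most delicate step is the bookkeeping needed to rewrite the clean form of $g(z_k)$ as the expression displayed in (\ref{e:30})/(\ref{e:31}). Using $\varphi_k+1=2\sin^2\Theta_k$ and $\sin3\beta_k=\sin\beta_k(1+2\cos2\beta_k)$ with $\beta_k=\Theta_k/3$, one checks
\[
g(z_k)=2\log\sin\beta_k-\log\!\big(1+2\cos2\beta_k\big)-\log(\varphi_k+1)+3\log3+\log2,
\]
so the bracketed trigonometric sum appears with the correct $-\frac{2\pi\sqrt3}{9}$ factor, while the additive constants $3\log3+\log2$, summed over the $\frac m2$ (even $m$) or $\frac{m-1}{2}$ (odd $m$) values of $k$, combine with the $\log3$ coming from $\psi(2/3)$. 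In the even case the $\log 3$ terms cancel exactly, leaving $-\frac{m\pi^2}{9}-\frac{m\pi\sqrt3}{9}\log2$; in the odd case the count $\frac{m-1}{2}$ leaves the residual $\frac{\pi\sqrt3}{3}\log3-\frac{(m-1)\pi\sqrt3}{9}\log2$, as claimed. I expect this constant-tracking, hand in hand with the closed-form evaluation of $g$, to be the main obstacle; everything else is the direct specialization $q=3$ of Theorem~\ref{Theorem1}.
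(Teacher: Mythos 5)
Your proposal is correct, and structurally it is the same proof as the paper's: specialize Theorem \ref{Theorem1} to $q=3$, collapse the gamma quotient via the Gauss multiplication formula --- your identity $\frac{\Gamma(p+\frac13)\Gamma(p+\frac23)}{(2p)!}=\frac{2\pi\sqrt{3}}{3}\cdot 27^{-p}\binom{3p}{p}$ is exactly the paper's (\ref{e:33}) (the paper's text invokes (\ref{e:6}) with ``$k=2$''; it should be $k=3$, as you use) --- reduce the double sum to $-\frac{2\pi\sqrt{3}}{9}\sum_{k}g(\lambda_k)$ with $g(z)=\sum_{p\geq 1}\binom{3p}{p}z^p/p$, and then track constants using $\psi(2/3)-\psi(1)=\frac{\pi\sqrt{3}}{6}-\frac{3}{2}\log 3$; your accounting of the $\log 2$ and $\log 3$ contributions in the even and odd cases (cancellation for even $m$, residual $\frac{\pi\sqrt{3}}{3}\log 3$ for odd $m$) is exactly right. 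Where you genuinely depart from the paper is the evaluation of $g$: the paper obtains both the closed form of $\sum_{p\geq1}\binom{3p}{p}x^{p-1}$ and its antiderivative from \textit{Mathematica} (verifying the latter by differentiation), whereas you derive the same identity (\ref{e:35}) by hand, from $\sum_{p\geq0}\binom{3p}{p}z^p={}_2F_1\left(\frac13,\frac23;\frac12;\frac{27z}{4}\right)$, the classical quadratic identity ${}_2F_1\left(a,1-a;\frac12;\sin^2\theta\right)=\frac{\cos((2a-1)\theta)}{\cos\theta}$, and the elementary antiderivative $\int\frac{\sin 2\beta}{1+2\cos 2\beta}\,{\rm d}\beta=-\frac14\log(1+2\cos 2\beta)$, with the passage back to the paper's displayed form resting on $\sin 3\beta=\sin\beta\,(1+2\cos 2\beta)$ and $\sin\Theta_k=\sqrt{(\varphi_k+1)/2}$. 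This buys a fully self-contained, human-checkable treatment of the one step the paper outsources to computer algebra, and in fact a cleaner closed form, $g(\lambda_k)=3\log 3-3\log\left(1+2\cos\left(\tfrac23\Theta_k\right)\right)$, than the five-term expression (\ref{e:35}). The only thing worth adding is a sentence justifying termwise integration of the series defining $g$, which is harmless here because $\lambda_k<\frac{4}{27}$ strictly (since $\varphi_k<1$ for every $k$ in range), so all manipulations take place inside the radius of convergence.
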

\begin{proof}
Let $m\geq 2$ be an even integer. Setting $q=3$ in (\ref{e:14}) we get
\begin{align}\label{e:32}
&\int_0^1 \frac{(x+1)\log \left(x^{3 m}+1\right)}{x^3+1}{\rm d }x=\frac{m}{12}\left(\psi^\prime(5/6)-\psi^\prime(1/3)\right)\notag\\
&-\frac{2m\pi\sqrt{3}}{9}\left(\psi(2/3)-\psi(1)\right)-\frac{1}{3}\sum_{k=0}^{\frac{m}{2}-1}\sum_{p=1}^{\infty}\frac{2^p(1+\varphi_k)^p}{p}\frac{\Gamma\left(p+\frac{1}{3}\right)\Gamma\left(p+\frac{2}{3}\right)}{(2p)!}.
\end{align}
By the Gauss multiplication formula  (\ref{e:6}) for the $\Gamma-$ function for $x=p$ and $k=2$ we have
\begin{equation}\label{e:33}
\Gamma\left(p+\frac{1}{3}\right)\Gamma\left(p+\frac{2}{3}\right)=\frac{2\pi\sqrt{3}(3p)!}{3^{3p+1}p!}.
\end{equation}
We therefore can write (\ref{e:32})  as follows:
\begin{align}\label{e:34}
&\int_0^1 \frac{(x+1)\log \left(x^{3 m}+1\right)}{x^3+1}{\rm d }x=\frac{m}{12}\left(\psi^\prime(5/6)-\psi^\prime(1/3)\right)\notag\\
&-\frac{2m\pi\sqrt{3}}{9}\left(\psi(2/3)-\psi(1)\right)-\frac{2\pi\sqrt{3}}{9}\sum_{k=0}^{\frac{m}{2}-1}\sum_{p=1}^{\infty}\frac{\left(\lambda_k\right)^p}{p}\binom{3p}{p},
\end{align}
where $\lambda_k=\frac{2(\varphi_k+1)}{27}$. \textit{Mathematica} gives us, for $|x|<\frac{4}{27}$
\begin{align*}
\sum _{p=1}^{\infty } \binom{3 p}{p} x^{p-1}=\frac{2 \cos \left(\frac{1}{3} \sin ^{-1}\left(\frac{3 \sqrt{3x}}{2}\right)\right)-\sqrt{4-27 x}}{x\sqrt{4-27 x}}.
\end{align*}
If we apply  the operator $\int_{0}^{\lambda_k}(\cdot){\rm d}x$ to both sides, \textit{Mathematica} is not able to give any result for the right-hand side. But,  it is easily able to evaluate the anti-derivative of the right-hand side of the above equality, as below:

\begin{align*}
&-\log (x)+2 \log \left[\sin \left(\frac{1}{3} \sin ^{-1}\left(\frac{3 \sqrt{3 x}}{2}\right)\right)\right]\\
&-\log \left[2 \cos \left(\frac{2}{3} \sin ^{-1}\left(\frac{3 \sqrt{3 x}}{2}\right)\right)+1\right].
\end{align*}
This can be easily verified by differentiating this function. Computing the required limits  for  $x\to 0$ and $x\to\lambda_k$, this gives us  the following equality:
\begin{align}\label{e:35}
\sum _{p=1}^{\infty } \binom{3 p}{p} \frac{(\lambda_k)^{p}}{p}&=2 \log \bigg[\sin \bigg(\frac{1}{3} \sin ^{-1}\bigg(\sqrt{\frac{1+\varphi_k}{2}}\bigg)\bigg)\bigg]\notag\\
&-\log \bigg[2 \cos \bigg(\frac{2}{3} \sin ^{-1}\bigg(\sqrt{\frac{1+\varphi_k}{2}}\bigg)\bigg)+1\bigg]\notag\\
&+\log 2+3\log 3-\log\left(\varphi_k+1\right).
\end{align}
Putting this in  (\ref{e:34}), and noting that $\psi(2/3)-\psi(1)=\frac{\pi\sqrt{3}}{6}-\frac{3}{2}\log 3$, we get
\allowdisplaybreaks
\begin{align*}
&\int_0^1 \frac{(x+1)\log \left(x^{3 m}+1\right)}{x^3+1}{\rm d }x=\frac{m}{12}\left(\psi^\prime(5/6)-\psi^\prime(1/3)\right)-\frac{\pi^2m}{9}\\
&-\frac{\pi m\sqrt{3}}{9}\log 2-\frac{2\pi\sqrt{3}}{9}\bigg\{2\sum_{k=0}^{\frac{m}{2}-1}\log\bigg[\sin\bigg(\frac{1}{3}\sin^{-1}\bigg(\sqrt{\frac{\varphi_k+1}{2}}\bigg)\bigg)\bigg]\\
&-\sum_{k=0}^{\frac{m}{2}-1}\log\bigg[2\cos\left(\frac{(2k+1)\pi}{3m}\right)+1\bigg]-\sum_{k=0}^{\frac{m}{2}-1}\log\left(\varphi_k+1\right)\bigg\}.
\end{align*}
which is the desired result (\ref{e:30}).  The proof of (\ref{e:31}) can be done similarly.
\end{proof}

\begin{thm}\label{Theorem2.6}Let $m\geq 2$ be any even integer. Then for  $m\geq 2$ even integers
\begin{align}\label{e:36}
&\int_0^1 \frac{(x^2+1)\log \left(x^{4 m}+1\right)}{x^4+1}{\rm d }x=\frac{m}{16}\left(\psi^\prime(7/8)-\psi^\prime(3/8)\right)-\frac{\pi^2m\sqrt{2}}{8}\notag\\
&+\frac{\pi m\sqrt{2}}{4}\log 2+\frac{\pi\sqrt{2}}{4}\bigg\{\sum_{k=0}^{\frac{m}{2}-1}\log\left(1+\sqrt{\frac{1-\varphi_k}{2}}\thinspace\right)\notag\\
&+2\sum_{k=0}^{\frac{m}{2}-1}\log\left(\sqrt{2}+\sqrt{1+\sqrt{\frac{1-\varphi_k}{2}}}\thinspace\right)\bigg\},
\end{align}
and for $m\geq1$ odd integers
\begin{align}\label{e:37}
&\int_0^1 \frac{(x^2+1)\log \left(x^{4 m}+1\right)}{x^4+1}{\rm d }x=\frac{m}{16}\left(\psi^\prime(7/8)-\psi^\prime(3/8)\right)-\frac{\pi^2m\sqrt{2}}{8}\notag\\
&+\frac{\pi (m+2)\sqrt{2}}{4}\log 2+\frac{\pi\sqrt{2}}{4}\bigg\{\sum_{k=0}^{\frac{m-3}{2}}\log\left(1+\sqrt{\frac{1-\varphi_k}{2}}\thinspace\right)\notag\\
&+2\sum_{k=0}^{\frac{m-3}{2}}\log\left(\sqrt{2}+\sqrt{1+\sqrt{\frac{1-\varphi_k}{2}}}\thinspace\right)\bigg\},
\end{align}
where $\varphi_k=\cos\left(\frac{(2k+1)\pi}{m}\right)$.
\end{thm}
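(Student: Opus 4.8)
The plan is to specialize Theorem~\ref{Theorem1} to $q=4$, in complete parallel with the proofs of Theorems~\ref{Theorem2.4} and \ref{Theorem2.5}. For even $m$ I would begin from \eqref{e:14} and for odd $m$ from \eqref{e:15}. In both cases the first line of the formula supplies the term $\frac{m}{16}\bigl(\psi'(7/8)-\psi'(3/8)\bigr)$ at once, while the reflection term reads $-\frac{m\pi}{4\sin(\pi/4)}\bigl(\psi(3/4)-\psi(1)\bigr)=-\frac{m\pi\sqrt2}{4}\bigl(\psi(3/4)-\psi(1)\bigr)$ because $\sin(\pi/4)=1/\sqrt2$. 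Substituting the special value $\psi(3/4)-\psi(1)=\tfrac{\pi}{2}-3\log 2$ converts this into $-\frac{m\pi^2\sqrt2}{8}+\frac{3m\pi\sqrt2}{4}\log 2$, which already produces the $\psi'$ and $\pi^2$ pieces of \eqref{e:36} and \eqref{e:37} together with a first batch of $\log 2$ terms.

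The second step is to linearize the Gamma quotient occurring in the double sum. Two applications of the duplication formula \eqref{e:5} give $\Gamma(p+\tfrac14)\Gamma(p+\tfrac34)=2^{1/2-2p}\sqrt\pi\,\Gamma(2p+\tfrac12)$ and then $\Gamma(2p+\tfrac12)=(4p)!\sqrt\pi/(4^{2p}(2p)!)$, whence
\begin{equation*}
\frac{\Gamma(p+1/4)\Gamma(p+3/4)}{(2p)!}=\sqrt2\,\pi\,2^{-6p}\binom{4p}{2p}.
\end{equation*}
This is the analogue of \eqref{e:33} for $q=4$. The double sum therefore becomes $-\frac{\pi\sqrt2}{4}\sum_{k}\sum_{p\ge1}\frac{1}{p}\binom{4p}{2p}\mu_k^{\,p}$ with $\mu_k=\frac{\varphi_k+1}{32}$, so that $4\sqrt{\mu_k}=\sqrt{(1+\varphi_k)/2}$.

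The heart of the argument, and the step I expect to be the main obstacle, is the closed evaluation of the inner series $\sum_{p\ge1}\frac{1}{p}\binom{4p}{2p}x^{p}$. Taking the even part of the classical expansion $\sum_{n\ge1}\frac{1}{n}\binom{2n}{n}y^{n}=-2\log\frac{1+\sqrt{1-4y}}{2}$ (equivalently, appealing to \emph{Mathematica} as in \eqref{e:35}) yields
\begin{equation*}
\sum_{p\ge1}\frac{1}{p}\binom{4p}{2p}x^{p}=-2\log\frac{1+\sqrt{1-4\sqrt x}}{2}-2\log\frac{1+\sqrt{1+4\sqrt x}}{2}.
\end{equation*}
In contrast with the $q=2,3$ cases this is a sum of \emph{two} logarithms carrying the nested radicals $\sqrt{1\pm4\sqrt x}$, and collapsing it to the single radical $\sqrt{(1-\varphi_k)/2}$ appearing in the statement is where the real labor lies. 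Writing $c=4\sqrt{\mu_k}=\sqrt{(1+\varphi_k)/2}$, I would compute the product $(1+\sqrt{1-c})(1+\sqrt{1+c})=1+\sqrt{1-c^{2}}+\bigl(\sqrt{1-c}+\sqrt{1+c}\bigr)$ and simplify it using the two identities $\sqrt{1-c^{2}}=\sqrt{(1-\varphi_k)/2}$ and $\sqrt{1-c}+\sqrt{1+c}=\sqrt2\,\sqrt{1+\sqrt{(1-\varphi_k)/2}}$, the latter verified by squaring. With $s=\sqrt{(1-\varphi_k)/2}$ this gives the inner series the value $-2\log\bigl(1+s+\sqrt2\sqrt{1+s}\bigr)+4\log 2$.

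Finally I would reassemble the pieces. The purely algebraic identity $2\log\bigl(1+s+\sqrt2\sqrt{1+s}\bigr)=\log(1+s)+2\log\bigl(\sqrt2+\sqrt{1+s}\bigr)$, again checked by squaring since $(1+s+\sqrt2\sqrt{1+s})^{2}=(1+s)(\sqrt2+\sqrt{1+s})^{2}$, turns the sum over $k$ into exactly the bracketed expression of \eqref{e:36}. The constant $4\log 2$ from each summand contributes $-\frac{m\pi\sqrt2}{2}\log 2$ for even $m$ (there being $m/2$ terms), which combines with the $\frac{3m\pi\sqrt2}{4}\log 2$ from the reflection term to leave $\frac{m\pi\sqrt2}{4}\log 2$, as in \eqref{e:36}; for odd $m$ there are $(m-1)/2$ terms, the series constant is $-\frac{(m-1)\pi\sqrt2}{2}\log 2$, and the same combination yields $\frac{(m+2)\pi\sqrt2}{4}\log 2$, matching \eqref{e:37}. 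The odd case is otherwise identical, starting from \eqref{e:15}. I expect the radical bookkeeping in the two-logarithm simplification, rather than any conceptual difficulty, to be the only delicate point.
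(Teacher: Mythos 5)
Your proof is correct and follows essentially the same route as the paper: set $q=4$ in \eqref{e:14} and \eqref{e:15}, reduce $\Gamma\left(p+\tfrac14\right)\Gamma\left(p+\tfrac34\right)/(2p)!$ to $\sqrt{2}\,\pi\,2^{-6p}\binom{4p}{2p}$ via the duplication formula, evaluate $\sum_{p\geq 1}\frac{1}{p}\binom{4p}{2p}x^p$ at $x=(\varphi_k+1)/32$, and then track the $\log 2$ contributions exactly as in \eqref{e:36} and \eqref{e:37}. The only difference is that the paper simply cites \cite[Theorem 3.1]{7} for the closed form of that series, whereas you derive it yourself from the classical expansion of $\sum_{n\geq 1}\frac{1}{n}\binom{2n}{n}y^n$ by even-part symmetrization together with the radical identities $\sqrt{1-c}+\sqrt{1+c}=\sqrt{2}\sqrt{1+s}$ and $1+s+\sqrt{2}\sqrt{1+s}=\sqrt{1+s}\left(\sqrt{1+s}+\sqrt{2}\right)$ --- a correct, self-contained replacement for that citation.
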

\begin{proof}
Let $m$ be an even integer. Setting $q=4$ in (\ref{e:14}), we can get
\begin{align*}
&\int_0^1 \frac{(x^2+1)\log \left(x^{4 m}+1\right)}{x^4+1}{\rm d }x=\frac{m}{16}(\psi^\prime(7/8)-\psi^\prime(3/8))\\
&-\frac{m\pi\sqrt{2}}{4}(\psi(3/4)-\psi(1))-\frac{1}{4}\sum_{k=0}^{\frac{m}{2}-1}\sum_{p=1}^{\infty}\frac{2^p\left(\varphi_k+1\right)^p}{p}\frac{\Gamma\left(p+\frac{1}{4}\right)\Gamma\left(p+\frac{3}{4}\right)}{(2p)!}.
\end{align*}
Using (\ref{e:5}) and (\ref{e:6}) for $k=4$ and $x=p$ we find that

\begin{equation}\label{e:38}
\Gamma\left(p+\frac{1}{4}\right)\Gamma\left(p+\frac{3}{4}\right)=\frac{\pi\sqrt{2}(4p)!}{(2p)!4^{3p}}.
\end{equation}

Applying this and $\psi(3/4)-\psi(1)=\frac{\pi^2}{2}-3\log 2$,  we arrive at

\begin{align}\label{e:39}
&\int_0^1 \frac{(x^2+1)\log \left(x^{4 m}+1\right)}{x^4+1}{\rm d }x=\frac{m}{16}(\psi^\prime(7/8)-\psi^\prime(3/8))\notag\\
&-\frac{\pi^2m\pi\sqrt{2}}{8}+\frac{3\pi m\sqrt{2}}{4}\log 2-\frac{\pi\sqrt{2}}{4}\sum_{k=0}^{\frac{m}{2}-1}\sum_{p=1}^{\infty}\frac{1}{p}\left(\frac{\varphi_k+1}{32}\right)^p\binom{4p}{2p}.
\end{align}
By \cite[Theorem 3.1]{7} we have for $|x|\leq \frac{1}{16}$
\begin{align*}
&\sum_{n=1}^{\infty}\frac{1}{n}\binom{4n}{2n}x^n\\
&=4\log 2-\log\left(1+\sqrt{1-16x}\right)-2\log\left(\sqrt{2}+\sqrt{1+\sqrt{1-16x}}\thinspace\right);
\end{align*}
see also \cite[Eq. (16)]{9} and \cite[Proposition 2]{17}. For $x=\frac{\varphi_k+1}{32}$  this gives

\begin{align*}
&\sum_{p=1}^{\infty}\frac{1}{p}\left(\frac{\varphi_k+1}{32}\right)^p\binom{4p}{2p}\\
&=4\log 2-\log\left(1+\sqrt{\frac{1-\varphi_k}{2}}\thinspace\right)-2\log\left(\sqrt{2}+\sqrt{1+\sqrt{\frac{1-\varphi_k}{2}}}\thinspace\right).
\end{align*}
Substituting this in (\ref{e:39}) and making some simplifications, we get the desired result (\ref{e:36}). The proof of (\ref{e:37}) can be done in a similar way to (\ref{e:36}) by putting $q=4$ in (\ref{e:15}).
\end{proof}

\begin{thm}\label{Theorem2.7} For all $q>\frac{1}{2}$ we have
\begin{align*}
&\int_{0}^{1}\frac{\left(x^{2q-2}+1\right)\log\left(x^q+1\right)}{x^{2q}+1}{\rm d }x=\frac{1}{16q}\left[\psi^\prime\left(\frac{4q-1}{4q}\right)-\psi^\prime\left(\frac{2q-1}{4q}\right)\right]\notag\\
&-\frac{1}{q}\sum_{n=0}^{\infty}\frac{2^n\Gamma\left(n+2-\frac{1}{q}\right)\Gamma\left(n+\frac{1}{q}\right)}{(2n+1)!}\left[\psi\left(n+2-\frac{1}{q}\right)-\psi(2n+2)\right].
\end{align*}
\end{thm}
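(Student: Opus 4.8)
The plan is to collapse everything onto the unit interval by the single substitution $r=x^q$, after which the factor $x^{2q-2}+1$ splits the integrand into two pieces whose behaviour under $r\mapsto 1/r$ does most of the work. Writing $I$ for the integral on the left, the substitution $r=x^q$ gives ${\rm d}x=\frac1q r^{1/q-1}{\rm d}r$ and $x^{2q}=r^2$, so that
\[
I=\frac1q\int_0^1\frac{\bigl(r^{1/q-1}+r^{1-1/q}\bigr)\log(1+r)}{1+r^2}\,{\rm d}r .
\]
Here the hypothesis $q>\tfrac12$ is exactly what makes $1/q-1>-1$, which guarantees convergence at $r=0$ (and, after the next step, at $r=\infty$). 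The two summands are interchanged by $\alpha\mapsto 2-\alpha$ with $\alpha=1/q$, and this symmetry is what I exploit.

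Next I would introduce the full-line integral $S:=\frac1q\int_0^\infty\frac{r^{1/q-1}\log(1+r)}{1+r^2}\,{\rm d}r$, split it as $\int_0^1+\int_1^\infty$, and apply $r\mapsto 1/r$ to the tail. Since that map sends $r^{1/q-1}$ to $r^{1-1/q}$ and $\log(1+r)$ to $\log(1+r)-\log r$, the $\int_0^1$ part of $S$ cancels the $r^{1/q-1}$ summand of $I$, while the tail reproduces the $r^{1-1/q}\log(1+r)$ term and leaves an extra $-\log r$. Collecting terms yields the clean identity
\[
I=S+\frac1q\int_0^1\frac{r^{1-1/q}\log r}{1+r^2}\,{\rm d}r .
\]
The last integral is elementary: expanding $\frac1{1+r^2}=\sum_{k\ge0}(-1)^kr^{2k}$ and using $\int_0^1 r^{a}\log r\,{\rm d}r=-(a+1)^{-2}$ gives $-\frac1q\sum_{k\ge0}(-1)^k(2k+2-1/q)^{-2}$; separating even and odd $k$ and writing each resulting series as a trigamma value via $\sum_{j\ge0}(j+c)^{-2}=\psi'(c)$ produces precisely $\frac{1}{16q}\bigl[\psi'(\frac{4q-1}{4q})-\psi'(\frac{2q-1}{4q})\bigr]$, the first term of the claim.

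It then remains to identify $S$ with the stated series. Here I would substitute $\tau=1/(1+r)$, so that $\log(1+r)=-\log\tau$ and $1+r^2=\bigl(\tau^2+(1-\tau)^2\bigr)/\tau^2$, giving $S=\frac1q\int_0^1\frac{\tau^{1-1/q}(1-\tau)^{1/q-1}(-\log\tau)}{\tau^2+(1-\tau)^2}\,{\rm d}\tau$. Since $\tau^2+(1-\tau)^2=1-2\tau(1-\tau)$ with $2\tau(1-\tau)\le\tfrac12$ on $[0,1]$, the expansion $\frac1{\tau^2+(1-\tau)^2}=\sum_{n\ge0}2^n\tau^n(1-\tau)^n$ converges uniformly, so I may integrate term by term (the one place needing care, justified by dominated convergence exactly as in Theorem \ref{Theorem1}). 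Each resulting integral is a differentiated Beta integral: with $a=n+2-1/q$ and $b=n+1/q$ one has $\int_0^1\tau^{a-1}(1-\tau)^{b-1}(-\log\tau)\,{\rm d}\tau=B(a,b)\bigl[\psi(a+b)-\psi(a)\bigr]$, and because $a+b=2n+2$ and $B(a,b)=\Gamma(n+1/q)\Gamma(n+2-1/q)/(2n+1)!$, summation gives exactly $-\frac1q\sum_{n\ge0}2^n\frac{\Gamma(n+1/q)\Gamma(n+2-1/q)}{(2n+1)!}\bigl[\psi(n+2-1/q)-\psi(2n+2)\bigr]$. Adding the two contributions finishes the proof.

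The anticipated obstacle is bookkeeping rather than depth: keeping the $r\mapsto 1/r$ cancellations and signs straight in the identity for $I$, and cleanly justifying the term-by-term integration of the Beta-type series. The uniform bound $2\tau(1-\tau)\le\tfrac12$ makes the latter routine, paralleling the dominated-convergence argument already invoked for Theorem \ref{Theorem1}, and the trigamma reduction is purely computational once even and odd indices are separated.
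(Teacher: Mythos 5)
Your proof is correct and is essentially the paper's own argument in different coordinates: your $r\mapsto 1/r$ reflection (after setting $r=x^q$) is the paper's substitution $x=1/u$, your $\tau=1/(1+r)$ is the paper's $t=1/(u^q+1)$, and both proofs then expand $\bigl(\tau^2+(1-\tau)^2\bigr)^{-1}=\sum_{n\geq 0}2^n\tau^n(1-\tau)^n$, integrate termwise via differentiated Beta integrals to obtain the digamma series, and evaluate the leftover $\log$-integral as an alternating series giving the trigamma terms. The only slip is the aside that $q>\tfrac12$ is "exactly what makes $1/q-1>-1$" (that inequality holds for every $q>0$); the hypothesis is really needed so that $2-\tfrac1q>0$, i.e.\ so that $r^{1-1/q}\log r$ is integrable at $r=0$ and the $n=0$ Beta integral $B\left(2-\tfrac1q,\tfrac1q\right)$ is finite.
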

\begin{proof}Making the substitution $x=\frac{1}{u}$ yields
\allowdisplaybreaks
\begin{align*}
&\int_{0}^{1}\frac{\log\left(x^q+1\right)}{x^{2q}+1}{\rm d }x=\int_{1}^{\infty}\frac{u^{2q-2}\log\left(u^q+1\right)}{u^{2q}+1}{\rm d }u-q\int_{1}^{\infty}\frac{u^{2q-2}\log u}{u^{2q}+1}{\rm d }u\\
&=\int_{0}^{\infty}\frac{u^{2q-2}\log\left(u^q+1\right)}{u^{2q}+1}{\rm d }u-q\int_{0}^{\infty}\frac{u^{2q-2}\log u}{u^{2q}+1}{\rm d }u\\
&-\int_{0}^{1}\frac{u^{2q-2}\log\left(u^q+1\right)}{u^{2q}+1}{\rm d }u+q\int_{0}^{1}\frac{u^{2q-2}\log u}{u^{2q}+1}{\rm d }u.
\end{align*}
This can be rewritten as follows:
\begin{align}\label{e:40}
&\int_{0}^{1}\frac{\left(x^{2q-2}+1\right)\log\left(x^q+1\right)}{x^{2q}+1}{\rm d }x=\int_{0}^{\infty}\frac{u^{2q-2}\log\left(u^q+1\right)}{u^{2q}+1}{\rm d }u\notag\\
&-q\int_{0}^{\infty}\frac{u^{2q-2}\log u}{u^{2q}+1}{\rm d }u+q\int_{0}^{1}\frac{u^{2q-2}\log u}{u^{2q}+1}{\rm d }u.
\end{align}
By the substitution $u=\frac{1}{u^q+1}$  we, after a little simplification, conclude that
\begin{align}\label{e:41}
\int_{0}^{\infty}\frac{u^{2q-2}\log\left(u^q+1\right)}{u^{2q}+1}{\rm d }u&-q\int_{0}^{\infty}\frac{u^{2q-2}\log u}{u^{2q}+1}{\rm d }u\notag\\
&=-\frac{1}{q}\int_{0}^{1}\frac{\left(\frac{t}{1-t}\right)^{1-1/q}\log t}{2t^2-2t+1}{\rm d }t.
\end{align}
Using the series expansion
\begin{align*}
\frac{1}{2t^2-2t+1}=\sum_{n=0}^{\infty}2^nt^n(1-t)^n,
\end{align*}
one easily see that
\begin{align*}
\int_{0}^{1}\frac{\left(\frac{t}{1-t}\right)^{1-1/q}\log t}{2t^2-2t+1}{\rm d }t=\int_{0}^{1}\left(\frac{t}{1-t}\right)^{1-1/q}\log t\sum_{n=0}^{\infty}2^nt^n(1-t)^ndt.
\end{align*}
This series is uniformly convergent, so we can interchange the order of integration and series, and
\begin{align*}
\int_{0}^{1}\frac{\left(\frac{t}{1-t}\right)^{1-1/q}\log t}{2t^2-2t+1}{\rm d }t=\sum_{n=0}^{\infty}2^n\int_{0}^{1}t^{n+1-1/q}(1-t)^{n-1+1/q}\log t{\rm d }t.
\end{align*}
By Lemma 2.1 from \cite{12} this becomes
\begin{align*}
\int_{0}^{1}\frac{\left(\frac{t}{1-t}\right)^{1-1/q}\log t}{2t^2-2t+1}{\rm d }t=\sum_{n=0}^{\infty}2^n\frac{d}{ds}\int_{0}^{1}t^{s}(1-t)^{n-1+1/q}{\rm d}t\bigg|_{s=n+1-1/q}.
\end{align*}
Using the relation (\ref{e:7}) one obtains

\begin{align}\label{e:42}
&\int_{0}^{1}\frac{\left(\frac{t}{1-t}\right)^{1-1/q}\log t}{2t^2-2t+1}{\rm d }t=\sum_{n=0}^{\infty}\frac{d}{ds}\frac{2^n\Gamma(s+1)\Gamma\left(n+\frac{1}{q}\right)}{\Gamma\left(s+n+1+\frac{1}{q}\right)}\bigg|_{s=n+1-1/q}\notag\\
&=\sum_{n=0}^{\infty}\frac{2^n\Gamma\left(n+2-\frac{1}{q}\right)\Gamma\left(n+\frac{1}{q}\right)}{(2n+1)!}\left[\psi\left(n+2-\frac{1}{q}\right)-\psi(2n+2)\right].
\end{align}
Clearly, we have
\begin{align*}
\int_{0}^{1}\frac{u^{2q-2}\log u}{u^{2q}+1}{\rm d }u&=\int_{0}^{1}u^{2q-2}\log u{\rm d }u\sum_{n=0}^{\infty}(-1)^nu^{2nk}{\rm d }u\\
&=\sum_{n=0}^{\infty}(-1)^n\int_{0}^{1}u^{2q+2nk-2}\log u{\rm d }u.
\end{align*}
It is an easy exercise to evaluate this  integral and we find
\begin{align}\label{e:43}
\int_{0}^{1}\frac{u^{2q-2}\log u}{u^{2q}+1}{\rm d }u&=\sum_{n=0}^{\infty}\frac{(-1)^{n+1}}{(2qn+2q-1)^2}\notag\\
&=\frac{1}{16q^2}\left[\psi^\prime\left(\frac{4q-1}{4q}\right)-\psi^\prime\left(\frac{2q-1}{4q}\right)\right].
\end{align}

So the desired result comes from combining the equations given in (\ref{e:40}), (\ref{e:41}), (\ref{e:42}) and (\ref{e:43}).
\end{proof}
\begin{thm}\label{Theorem2.8} For all $q>\frac{1}{2}$ we have
\begin{align*}
&\int_{0}^{1}\frac{\left(x^{3q-2}+1\right)\log\left(x^q+1\right)}{x^{3q}+1}{\rm d }x=\frac{1}{36q}\left[\psi ^\prime\left(\frac{6 q-1}{6 q}\right)-\psi ^\prime\left(\frac{3 q-1}{6 q}\right)\right]\notag\\
&+\frac{1}{q}\sum_{n=0}^{\infty}\frac{3^n\Gamma\left(n+3-\frac{1}{q}\right)\Gamma\left(n+\frac{1}{q}\right)}{(2n+2)!}\left[\psi(2n+3)-\psi\left(n+3-\frac{1}{q}\right)\right].
\end{align*}
\end{thm}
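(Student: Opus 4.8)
The plan is to mirror the proof of Theorem \ref{Theorem2.7} step by step, since the only structural change is that the denominator $x^{2q}+1$ is replaced by $x^{3q}+1$ while the numerator weight becomes $x^{3q-2}+1$. First I would apply the substitution $x=1/u$ to $\int_0^1 \frac{\log(x^q+1)}{x^{3q}+1}\,dx$, using $\log(u^{-q}+1)=\log(u^q+1)-q\log u$ and $\frac{1}{u^{-3q}+1}=\frac{u^{3q}}{u^{3q}+1}$, to transport it onto $[1,\infty)$. Writing the $\log u$ integral over $[1,\infty)$ as its value over $[0,\infty)$ minus its value over $[0,1]$ and recombining, I would obtain the exact analogue of (\ref{e:40}):
\[
\int_0^1 \frac{(x^{3q-2}+1)\log(x^q+1)}{x^{3q}+1}\,dx = \int_0^\infty \frac{u^{3q-2}\log(u^q+1)}{u^{3q}+1}\,du - q\int_0^\infty \frac{u^{3q-2}\log u}{u^{3q}+1}\,du + q\int_0^1 \frac{u^{3q-2}\log u}{u^{3q}+1}\,du.
\]

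For the two integrals over $[0,\infty)$ I would set $t=\frac{1}{u^q+1}$, so that $u^q=\frac{1-t}{t}$ and the combined logarithm collapses to $-\log(1-t)$ (equivalently $\log t$ after $t\mapsto 1-t$, which fixes the quadratic below). The decisive computation is $u^{3q}+1=\frac{(1-t)^3+t^3}{t^3}=\frac{3t^2-3t+1}{t^3}$; here the cubic identity $(1-t)^3+t^3=1-3t(1-t)$ plays exactly the role that $1-2t(1-t)$ played in Theorem \ref{Theorem2.7}. Expanding $\frac{1}{3t^2-3t+1}=\sum_{n=0}^\infty 3^n t^n(1-t)^n$ — legitimate since $3t(1-t)\le 3/4<1$ on $(0,1)$ — furnishes the factor $3^n$ of the statement. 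After interchanging sum and integral (justified exactly as before) I would be left with integrals $\int_0^1 t^{n+2-1/q}(1-t)^{n+1/q-1}\log t\,dt$, which I would evaluate by differentiating the Beta integral $B(s+1,n+1/q)$ in $s$ and setting $s=n+2-1/q$. Since then $s+1=n+3-\tfrac1q$ and $s+n+1+\tfrac1q=2n+3$, the gamma-beta relation (\ref{e:7}) together with $\Gamma(2n+3)=(2n+2)!$ produces precisely $\frac{1}{q}\sum_{n\ge0}\frac{3^n\Gamma(n+3-1/q)\Gamma(n+1/q)}{(2n+2)!}\left[\psi(2n+3)-\psi(n+3-1/q)\right]$.

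Finally I would treat the elementary term $q\int_0^1 \frac{u^{3q-2}\log u}{u^{3q}+1}\,du$ by expanding $\frac{1}{u^{3q}+1}=\sum_{n\ge0}(-1)^nu^{3qn}$ and using $\int_0^1 u^a\log u\,du=-\frac{1}{(a+1)^2}$, which yields $\sum_{k\ge1}\frac{(-1)^k}{(3qk-1)^2}$. Splitting this alternating sum by the parity of $k$ converts it, via $\psi^\prime(x)=\sum_{n\ge0}(n+x)^{-2}$, into the two trigamma values $\psi^\prime(\frac{6q-1}{6q})$ (from even $k$) and $\psi^\prime(\frac{3q-1}{6q})$ (from odd $k$), giving the closed form $\frac{1}{36q}\left[\psi^\prime(\frac{6q-1}{6q})-\psi^\prime(\frac{3q-1}{6q})\right]$. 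Assembling the three pieces produces the claim. I expect the only genuine obstacle to be bookkeeping rather than anything analytic: getting the cubic factorization and the resulting exponents correct, confirming the geometric expansion remains valid (now with $3/4<1$ rather than the safer $1/2$ of the quadratic case), and carefully matching the parity split of the alternating sum against the two trigamma arguments. The justifications for uniform/dominated convergence and for differentiation under the integral sign (Lemma 2.1 of \cite{12}) are identical to those already invoked in Theorem \ref{Theorem2.7}.
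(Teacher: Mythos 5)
Your proposal is correct and is exactly the argument the paper intends: its proof of Theorem \ref{Theorem2.8} consists of the single remark that it proceeds ``quite similarly'' to Theorem \ref{Theorem2.7}, and your write-up carries out that analogy faithfully, with the key points — the identity $(1-t)^3+t^3=1-3t(1-t)$ giving the factor $3^n$, the Beta-derivative evaluation with $s+1=n+3-\tfrac{1}{q}$ and $s+n+1+\tfrac{1}{q}=2n+3$ yielding $(2n+2)!$, and the parity split of $\sum_{k\ge 1}(-1)^k(3qk-1)^{-2}$ into the two trigamma terms with prefactor $\tfrac{1}{36q}$ — all checked and accurate.
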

\begin{proof}
The proof can be carried out quite similarly to that of the proof of Theorem \ref{Theorem2.7}.
\end{proof}
\begin{thm}\label{Theorem2.9}Let $m\geq 1$ be any odd integer. Then we have the following identity
\begin{equation*}
\int_0^1 \frac{\log \left(x^m+1\right)}{x^2+1} {\rm d }x=\frac{\pi m}{8}\log 2-\frac{1}{2} \sum _{k=0}^{\frac{m-3}{2}} \sum _{n=0}^{\infty } \frac{2^n \sum _{p=1}^n \frac{(\varphi_k+1)^p}{p}}{(2 n+1) \binom{2 n}{n}},
\end{equation*}
where $\varphi_k=\cos\frac{(2k+1)\pi}{m}$.
\end{thm}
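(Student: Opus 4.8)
The plan is to reduce the integral over $[0,1]$ to one over $[0,\infty)$ and then exploit the odd-$m$ factorization (\ref{e:12}). First I would substitute $x=1/u$ in $I:=\int_0^1\frac{\log(x^m+1)}{x^2+1}{\rm d}x$; since $\log(u^{-m}+1)=\log(u^m+1)-m\log u$, this gives $I=\int_1^\infty\frac{\log(u^m+1)}{u^2+1}{\rm d}u-m\int_1^\infty\frac{\log u}{u^2+1}{\rm d}u$. The elementary evaluations $\int_0^1\frac{\log u}{1+u^2}{\rm d}u=-G$ and (via $u\mapsto1/u$) $\int_1^\infty\frac{\log u}{1+u^2}{\rm d}u=G$ then yield $\int_0^\infty\frac{\log(x^m+1)}{x^2+1}{\rm d}x=2I+mG$, so it suffices to evaluate $J:=\int_0^\infty\frac{\log(x^m+1)}{x^2+1}{\rm d}x$ and set $I=\tfrac12(J-mG)$.

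Next I would compute $J$ with the substitution $x=\frac{1-t}{t}$ (equivalently $t=\frac1{1+x}$), under which $x^2+1=\frac{2t^2-2t+1}{t^2}$ and ${\rm d}x=-\frac{{\rm d}t}{t^2}$, so that $J=\int_0^1\frac{\log\bigl((\frac{1-t}{t})^m+1\bigr)}{2t^2-2t+1}{\rm d}t$. Applying the odd-$m$ identity (\ref{e:12}) splits the numerator and gives $J=J_1-mJ_2$, where $J_1=\sum_{k=0}^{(m-3)/2}\int_0^1\frac{\log[1-2t(1-t)(\varphi_k+1)]}{2t^2-2t+1}{\rm d}t$ and $J_2=\int_0^1\frac{\log t}{2t^2-2t+1}{\rm d}t$. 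For $J_2$ I would undo the substitution: since $\log t=-\log(1+x)$, the term $-mJ_2$ is exactly $m\int_0^\infty\frac{\log(1+x)}{1+x^2}{\rm d}x$, which equals $m\bigl(\frac{\pi}{4}\log 2+G\bigr)$ (split at $1$ and use the classical value $\int_0^1\frac{\log(1+x)}{1+x^2}{\rm d}x=\frac{\pi}{8}\log 2$ together with $\int_0^1\frac{\log u}{1+u^2}{\rm d}u=-G$). The $mG$ produced here cancels the $-mG$ from the first step, which is why no Catalan constant survives in the final formula.

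The \emph{heart} of the proof is $J_1$. Expanding $\frac1{2t^2-2t+1}=\sum_{n\geq0}2^n[t(1-t)]^n$ and $\log[1-2t(1-t)(\varphi_k+1)]=-\sum_{p\geq1}\frac{2^p(\varphi_k+1)^p}{p}[t(1-t)]^p$, and forming the Cauchy product in the variable $w=t(1-t)$, collapses the two series into $-\sum_{n\geq1}2^n\bigl(\sum_{p=1}^n\frac{(\varphi_k+1)^p}{p}\bigr)[t(1-t)]^n$; the inner partial sum is precisely the one appearing in the statement. Integrating termwise and using the beta evaluation $\int_0^1 t^n(1-t)^n{\rm d}t=B(n+1,n+1)=\frac1{(2n+1)\binom{2n}{n}}$ gives $\int_0^1\frac{\log[1-2t(1-t)(\varphi_k+1)]}{2t^2-2t+1}{\rm d}t=-\sum_{n=0}^\infty\frac{2^n\sum_{p=1}^n(\varphi_k+1)^p/p}{(2n+1)\binom{2n}{n}}$ (the $n=0$ term vanishing as an empty sum), so that $J_1$ equals minus the double sum in the statement.

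Combining the pieces gives $J=J_1+\frac{\pi m}{4}\log 2+mG$, and substituting into $I=\tfrac12(J-mG)$ produces $I=\frac{\pi m}{8}\log 2-\frac12\sum_{k=0}^{(m-3)/2}\sum_{n=0}^\infty\frac{2^n\sum_{p=1}^n(\varphi_k+1)^p/p}{(2n+1)\binom{2n}{n}}$, as claimed. The main technical obstacle is justifying the two interchanges of summation and integration (legitimate because $t(1-t)\le\frac14$ forces $|2t(1-t)(\varphi_k+1)|<1$ on $(0,1)$, giving the same dominated/uniform convergence used in the proofs above) and the bookkeeping of the Cauchy product that manufactures the truncated logarithmic sum $\sum_{p=1}^n(\varphi_k+1)^p/p$.
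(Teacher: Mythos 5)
Your proof is correct, and its skeleton coincides with the paper's: reduce to an integral over $[0,\infty)$ via $x\mapsto 1/x$ (this is the paper's (\ref{e:44}), with $B=\int_1^\infty\frac{\log x}{x^2+1}{\rm d}x=G$), substitute $t=\frac{1}{1+x}$, apply the odd-$m$ case (\ref{e:12}) of Lemma \ref{Lemma}, expand $\frac{1}{2t^2-2t+1}$ geometrically, take the Cauchy product to manufacture the truncated sum $\sum_{p=1}^n\frac{(\varphi_k+1)^p}{p}$, and integrate termwise using $B(n+1,n+1)=\frac{1}{(2n+1)\binom{2n}{n}}$ — exactly how the paper produces the double sum. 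The one genuine divergence is the treatment of the $-m\log t$ term. The paper keeps it inside the series expansion, which yields the harmonic sum $m\sum_{n\geq 0}\frac{2^n(H_{2n+1}-H_n)}{(2n+1)\binom{2n}{n}}$, and must then invoke Example \ref{Example12} — whose proof depends on a convolution identity from \cite{6} and a series evaluation from \cite{8} — to identify this sum as $m\left(\frac{\pi}{4}\log 2+G\right)$. You instead undo the substitution on that single term, recognizing $-mJ_2=m\int_0^\infty\frac{\log(1+x)}{1+x^2}{\rm d}x$, and evaluate it by the classical $\int_0^1\frac{\log(1+x)}{1+x^2}{\rm d}x=\frac{\pi}{8}\log 2$ together with $x\mapsto 1/x$. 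This buys self-containedness: your argument needs no external binomial--harmonic identities. It also has a pleasant corollary: since your evaluation and the paper's differ only in this one term, equating the two gives an independent derivation of Example \ref{Example12}, i.e.\ of $\sum_{n\geq 0}\frac{2^n(H_{2n+1}-H_n)}{(2n+1)\binom{2n}{n}}=\frac{\pi}{4}\log 2+G$, rather than consuming it as an input. Your convergence justification is also sound, since $2t(1-t)(\varphi_k+1)\leq\frac{\varphi_k+1}{2}<1$ uniformly on $[0,1]$ for the relevant $k$.
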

\begin{proof}Let us denote the integral on the left-hand side of this equality by $I$. Then making the change of variable $x=\frac{1}{t}$ and splitting the integral it follows that
\begin{equation*}
I=\int_0^\infty \frac{\log \left(t^m+1\right)}{t^2+1} {\rm d }t-I-m\int_1^\infty \frac{\log t}{t^2+1} {\rm d }t
\end{equation*}
or
\begin{equation}\label{e:44}
I=\frac{1}{2}\underbrace{\int_0^\infty \frac{\log \left(t^m+1\right)}{t^2+1} {\rm d }t}_A-\frac{m}{2}\underbrace{\int_1^\infty \frac{\log t}{t^2+1} {\rm d }t}_B.
\end{equation}
Substituting $t=\frac{1-u}{u}$ one can get
\begin{equation*}
A=\int_0^1 \frac{\log \left(\left(\frac{1-u}{u}\right)^m+1\right)}{1-2u+2u^2} {\rm d }u.
\end{equation*}
Employing Lemma 1 and expanding the denominator into its power series, we deduce that
\begin{align*}
A=\int_{0}^{1}\sum_{n=0}^{\infty}2^nu^n(1-u)^n\bigg\{\sum_{k=0}^{\frac{m-3}{2}}\log\left(1-2u(1-u)(\varphi_k+1)\right)-m\log u\bigg\}{\rm d }u.
\end{align*}
Expanding the logarithmic function  we find that
\begin{align*}
A&=-\sum_{k=0}^{\frac{m-3}{2}}\int_{0}^{1}\sum_{n=0}^{\infty}2^nu^n(1-u)^n\sum_{n=0}^{\infty}\frac{2^{n+1}u^{n+1}(1-u)^{n+1}(\varphi_k+1)^{n+1}}{n+1}{\rm d}u\\
&-m\int_{0}^{1}\sum_{n=0}^{\infty}2^nu^n(1-u)^n\log u {\rm d }u.
\end{align*}
Using the Cauchy product formula for the two convergent series, and inverting the order of integration and summation, it easily follows that
\begin{align*}
A&=-\sum_{k=0}^{\frac{m-3}{2}}\sum_{n=0}^{\infty}\sum_{p={\rm d }0}^{n}\frac{2^{n+1}(\varphi_k+1)^{p+1}}{p+1}\int_{0}^{1}u^{n+1}(1-u)^{n+1}{\rm d }u\\
&-m\sum_{n=0}^{\infty}2^n\int_{0}^{1}u^n(1-u)^n\log u {\rm d }u.
\end{align*}
Using the gamma-beta functional equation immediately yields
\begin{align*}
A=-\sum_{k=0}^{\frac{m-3}{2}}\sum_{n=1}^{\infty}\frac{2^n\sum_{p=1}^{n}\frac{(\varphi_k+1)^p}{p}}{(2n+1)\binom{2n}{n}}+m\sum_{n=0}^{\infty}\frac{2^n(H_{2n+1}-H_n)}{(2n+1)\binom{2n}{n}},
\end{align*}
from which the proof follows from (\ref{e:44}), Example 12 and the fact $B=\int_{1}^{\infty}\frac{\log x{\rm d}x}{x^2+1}=G$.
\end{proof}
\begin{thm}\label{Theorem2.10}
For even integers $m\geq 2$ we have
\begin{equation}\label{e:45}
\int_{0}^{\infty}\frac{\log\left(x^{2m}+1\right)}{x^2+1}{\rm d }x=\frac{\pi m}{2}\log2+\pi\sum_{k=0}^{\frac{m-2}{2}}\log\left(1+\sqrt{\frac{1-\varphi_k}{2}}\right).
\end{equation}
Similarly, for odd integers $m\geq 1$
\begin{equation}\label{e:46}
\int_{0}^{\infty}\frac{\log\left(x^{2m}+1\right)}{x^2+1}{\rm d }x=\frac{\pi (m+1)}{2}\log2+\pi\sum_{k=0}^{\frac{m-3}{2}}\log\left(1+\sqrt{\frac{1-\varphi_k}{2}}\right).
\end{equation}
\end{thm}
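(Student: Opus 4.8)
The plan is to reduce this infinite-interval integral to the unit-interval evaluation already established in Theorem \ref{Theorem2.4}, exploiting the self-similarity of the integrand under the involution $x\mapsto 1/x$. First I would split
\[
\int_0^\infty \frac{\log(x^{2m}+1)}{x^2+1}\,{\rm d}x = \int_0^1 \frac{\log(x^{2m}+1)}{x^2+1}\,{\rm d}x + \int_1^\infty \frac{\log(x^{2m}+1)}{x^2+1}\,{\rm d}x,
\]
noting that the tail converges since the integrand is $O(\log x / x^2)$ as $x\to\infty$. In the second piece I substitute $x = 1/u$; the factor $\frac{1}{x^2+1}$ together with the Jacobian ${\rm d}x = -u^{-2}\,{\rm d}u$ collapses to $\frac{1}{1+u^2}\,{\rm d}u$ after reversing the limits, while $\log(u^{-2m}+1) = \log(u^{2m}+1) - 2m\log u$ splits off a logarithmic remainder.

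Second, I would recombine the resulting pieces. The term $\int_0^1 \frac{\log(u^{2m}+1)}{1+u^2}\,{\rm d}u$ is identical to the first half, so the two unit-interval integrals add to $2\int_0^1 \frac{\log(x^{2m}+1)}{x^2+1}\,{\rm d}x$. The remainder yields $-2m\int_0^1 \frac{\log u}{1+u^2}\,{\rm d}u$; expanding $(1+u^2)^{-1}$ as a geometric series and integrating $\int_0^1 u^{2n}\log u\,{\rm d}u = -(2n+1)^{-2}$ term by term gives $\int_0^1 \frac{\log u}{1+u^2}\,{\rm d}u = -G$, so this contributes $+2mG$. Altogether
\[
\int_0^\infty \frac{\log(x^{2m}+1)}{x^2+1}\,{\rm d}x = 2\int_0^1 \frac{\log(x^{2m}+1)}{x^2+1}\,{\rm d}x + 2mG.
\]

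Finally, I would substitute the closed forms from Theorem \ref{Theorem2.4}: equation (\ref{e:26}) for even $m$ and (\ref{e:27}) for odd $m$. In each case the $-mG$ inside the unit-interval formula doubles to $-2mG$ and cancels exactly against the $+2mG$ produced above; the $\frac{m\pi}{4}\log 2$ (respectively $\frac{(m+1)\pi}{4}\log 2$) term and the sum $\frac{\pi}{2}\sum\log\big(1+\sqrt{(1-\varphi_k)/2}\,\big)$ each double to match the right-hand sides of (\ref{e:45}) and (\ref{e:46}), and one checks that $\frac{m}{2}-1 = \frac{m-2}{2}$ so the summation ranges coincide. I do not foresee a genuine obstacle here: once the $x\mapsto 1/x$ reduction is in place the argument is essentially bookkeeping. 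The only point demanding mild care is the evaluation $\int_0^1 \frac{\log u}{1+u^2}\,{\rm d}u = -G$ together with tracking the sign of the Catalan contributions, since a slip there would leave a spurious multiple of $G$ in the final answer rather than the clean cancellation that makes (\ref{e:45}) and (\ref{e:46}) free of $G$.
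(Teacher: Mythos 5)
Your proof is correct, but it takes a genuinely different route from the paper's. The paper proves (\ref{e:45}) by specializing the intermediate series representation from the proof of Theorem \ref{Theorem1} (equation (\ref{e:18}), equivalently (\ref{e:22})) to $q=2$, obtaining
\begin{equation*}
\int_{0}^{\infty}\frac{\log\left(x^{2m}+1\right)}{x^2+1}\,{\rm d}x=\pi m\log2-\frac{\pi}{2}\sum_{k=0}^{\frac{m-2}{2}}\sum_{p=1}^{\infty}\frac{1}{p}\left(\frac{\varphi_k+1}{8}\right)^p\binom{2p}{p}
\end{equation*}
(correcting an obvious misprint in the paper's display, where the base appears as $\frac{\varphi_k+1}{p}$), and then invoking the closed form (\ref{e:29}); it never references Theorem \ref{Theorem2.4}. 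You instead derive the inversion identity
\begin{equation*}
\int_{0}^{\infty}\frac{\log\left(x^{2m}+1\right)}{x^2+1}\,{\rm d}x=2\int_{0}^{1}\frac{\log\left(x^{2m}+1\right)}{x^2+1}\,{\rm d}x+2mG
\end{equation*}
and feed in (\ref{e:26}) and (\ref{e:27}). Note that your identity is exactly the paper's decomposition (\ref{e:16}) specialized to $q=2$, where $B=\int_{0}^{\infty}\frac{\log u}{u^2+1}\,{\rm d}u=0$ by the symmetry $u\mapsto 1/u$ and $C=\int_{0}^{1}\frac{\log u}{u^2+1}\,{\rm d}u=-G$, but read in the reverse direction: the paper passes from the infinite interval down to $[0,1]$, you pass from $[0,1]$ back up. What your route buys: it stays at the level of already-stated theorems (the binomial-sum identity (\ref{e:29}) enters only implicitly, having been absorbed into Theorem \ref{Theorem2.4}), it is pure bookkeeping, and it explains structurally why the Catalan constant appearing in (\ref{e:26})--(\ref{e:27}) is absent from (\ref{e:45})--(\ref{e:46}): the $-mG$ doubles and cancels against the $+2mG$ coming from the logarithmic remainder. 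What the paper's route buys: independence from Theorem \ref{Theorem2.4}, so the same template transfers to other values of $q$ (as in Theorem \ref{Theorem2.11}, where $q=3$). The delicate points you flag --- the tail convergence, the evaluation $\int_{0}^{1}\frac{\log u}{1+u^2}\,{\rm d}u=-G$, the sign of the Catalan contribution, and the index agreement $\frac{m}{2}-1=\frac{m-2}{2}$ --- all check out.
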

\begin{proof}We give a proof for only (\ref{e:45}). The proof of (\ref{e:46}) is similar.
Letting $q=2$ in (\ref{e:18}), we get
\begin{equation*}
\int_{0}^{\infty}\frac{\log\left(x^{2m}+1\right)}{x^2+1}{\rm d }x=\pi m\log2-\frac{\pi}{2}\sum_{k=0}^{\frac{m-2}{2}}\sum_{p=1}^{\infty}\frac{1}{p}\left(\frac{\varphi_k+1}{p}\right)^p\binom{2p}{p}.
\end{equation*}
Now the proof follows from (\ref{e:29}).
\end{proof}
\begin{thm}\label{Theorem2.11}We have
\begin{align}\label{e:47}
&\int_0^{\infty } \frac{x \log \left(x^{3m}+1\right)}{x^3+1}{\rm d }x=\frac{\pi^2m}{9}-\frac{\pi m\sqrt{3}}{9}\log 2\notag\\
&+\frac{2\pi\sqrt{3}}{9}\sum _{k=0}^{\frac{m-2}{2}} \bigg\{\log (\varphi_k+1)-2 \log \bigg[\sin \left(\frac{1}{3} \sin ^{-1}\sqrt{\frac{\varphi_k+1}{2}}\right)\bigg]\notag\\
&+\log \bigg[2 \cos \left(\frac{2}{3} \sin ^{-1}\sqrt{\frac{\varphi_k+1}{2}}\right)+1\bigg]\bigg\}
\end{align}
for all even integers $m\geq 2$, and
\begin{align}\label{e:48}
&\int_0^{\infty } \frac{x \log \left(x^{3m}+1\right)}{x^3+1}{\rm d }x=\frac{\pi^2m}{9}+\frac{\pi \sqrt{3}}{3}\log 3-\frac{\pi(m-1)\sqrt{3}}{9}\log 2\notag\\
&+\frac{2\pi\sqrt{3}}{9}\sum _{k=0}^{\frac{m-3}{2}} \bigg\{\log(\varphi_k+1)-2 \log \bigg[\sin \left(\frac{1}{3} \sin ^{-1}\sqrt{\frac{\varphi_k+1}{2}}\right)\bigg]\notag\\
&+\log \bigg[2 \cos \left(\frac{2}{3} \sin ^{-1}\sqrt{\frac{\varphi_k+1}{2}}\right)+1\bigg]\bigg\}
\end{align}
for all odd integers $m\geq 1$.
\end{thm}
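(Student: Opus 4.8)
The plan is to recognize that the integral on the left of both (\ref{e:47}) and (\ref{e:48}) is precisely the quantity $A$ from the proof of Theorem \ref{Theorem1}, specialized to $q=3$: since $u^{q-2}=u$ when $q=3$, the integral
\[
A=\int_0^\infty\frac{u\log(u^{3m}+1)}{u^3+1}\,{\rm d}u
\]
is exactly what must be evaluated. Hence no new integration is needed; the proof amounts to substituting $q=3$ into the closed form (\ref{e:22}) and simplifying, reusing the two auxiliary computations (\ref{e:33}) and (\ref{e:35}) already established in the proof of Theorem \ref{Theorem2.5}.

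For even $m$ I would first put $q=3$ in (\ref{e:22}) and convert the Gamma quotient by the Gauss multiplication identity (\ref{e:33}), exactly as in passing from (\ref{e:32}) to (\ref{e:34}), obtaining
\[
A=-\frac{2\pi\sqrt3}{9}\sum_{k=0}^{\frac m2-1}\sum_{p=1}^\infty\frac{\lambda_k^p}{p}\binom{3p}{p}-\frac{m\pi}{3\sin(\pi/3)}\bigl(\psi(1/3)-\psi(1)\bigr),
\]
where $\lambda_k=2(\varphi_k+1)/27$. I would then replace the inner $p$-sum by its closed form (\ref{e:35}) and evaluate the digamma difference from the reflection formula (\ref{e:9}) together with the value $\psi(2/3)-\psi(1)=\frac{\pi\sqrt3}{6}-\frac32\log3$ used earlier, which yields $\psi(1/3)-\psi(1)=-\frac{\pi\sqrt3}{6}-\frac32\log3$.

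Collecting terms is then the whole game. The digamma term contributes $\frac{\pi^2m}{9}+\frac{\pi\sqrt3 m}{3}\log3$, while the constant parts $\log2+3\log3$ inside (\ref{e:35}), summed over the $m/2$ admissible values of $k$, contribute $-\frac{\pi\sqrt3 m}{9}\log2-\frac{\pi\sqrt3 m}{3}\log3$. The two $\log3$ pieces cancel exactly, leaving $\frac{\pi^2m}{9}-\frac{\pi\sqrt3 m}{9}\log2$ together with the $k$-dependent logarithms, which reassemble into the brace of (\ref{e:47}).

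The only point requiring real care is this $\log3$ bookkeeping together with the count of terms in the $k$-sum, since that is exactly where the even and odd cases part. For odd $m$, formula (\ref{e:12}) furnishes only $\frac{m-1}{2}$ summands, so the $\log2$ coefficient becomes $-\frac{\pi\sqrt3(m-1)}{9}\log2$ and the $\log3$ cancellation is now incomplete, leaving the residual $+\frac{\pi\sqrt3}{3}\log3$ that distinguishes (\ref{e:48}) from (\ref{e:47}); all remaining steps run verbatim. I expect no analytic obstacle, since convergence and term-by-term integration are inherited from the proof of Theorem \ref{Theorem1}, so the difficulty is purely the careful constant-tracking just described.
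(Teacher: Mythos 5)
Your proposal is correct and follows essentially the same route as the paper: the paper likewise proves Theorem \ref{Theorem2.11} by specializing the quantity $A$ of (\ref{e:16})/(\ref{e:22}) to $q=3$, invoking Lemma \ref{Lemma} together with the Gauss-multiplication reduction (\ref{e:33}) and the closed form (\ref{e:35}) from Theorem \ref{Theorem2.5}, and using $\psi(1/3)-\psi(1)=-\tfrac{\pi\sqrt{3}}{6}-\tfrac{3}{2}\log 3$. Your constant-tracking (the $\log 3$ cancellation in the even case and the residual $\tfrac{\pi\sqrt{3}}{3}\log 3$ with coefficient $m-1$ on $\log 2$ in the odd case) checks out and in fact supplies the details the paper omits "for briefness."
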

\begin{proof}Let $m\geq 2$ be an even integer. The proof of (\ref{e:47}) follows from  setting $q=3$ in (\ref{e:16}), using (\ref{e:22}), and  Lemma \ref{Lemma} and noting that $\psi(1/3)-\psi(1)=-\frac{\pi\sqrt{3}}{6}-\frac{3}{2}\log 3$. For briefness we omit the details. The proof of (\ref{e:48}) can be carried out in a similar fashion.
\end{proof}
\section{Euler- BBP - type sums}
In this section we collect some basic theorems regarding Euler-BBP type series. The first theorem generalizes some results obtained by Sofo \cite{19}.
\begin{thm}\label{Theorem3.1} Let $m\geq 1$ be any odd integer. Then
\begin{align*}
\sum_{n=1}^{\infty}(-1)^{n+1}H_n\sum_{j=0}^{m-1}\frac{(-1)^j}{2mn+2j+1}&=\frac{(m+1)\pi}{4}\log 2-mG\\
&+\frac{\pi}{2}\sum_{k=0}^{\frac{m-3}{2}}\log\left(1+\sqrt{\frac{1-\varphi_k}{2}}\right),
\end{align*}
where $\varphi_k$ is as defined by (\ref{e:13}).
\end{thm}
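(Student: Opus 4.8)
The plan is to observe that the right-hand side of the statement is \emph{verbatim} the closed form established for $\int_0^1 \frac{\log\left(x^{2m}+1\right)}{x^2+1}{\rm d}x$ in equation (\ref{e:27}) of Theorem \ref{Theorem2.4}. Consequently it suffices to prove the purely analytic identity that, for odd $m$, the double series on the left equals this integral; the closed form then follows by quoting (\ref{e:27}).

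First I would convert the inner finite sum into an integral. Using $\frac{1}{2mn+2j+1}=\int_0^1 x^{2mn+2j}{\rm d}x$ and summing the geometric series $\sum_{j=0}^{m-1}(-x^2)^j=\frac{1-(-x^2)^m}{1+x^2}=\frac{1+x^{2m}}{1+x^2}$, where the final equality uses that $m$ is odd so that $(-x^2)^m=-x^{2m}$, I obtain $\sum_{j=0}^{m-1}\frac{(-1)^j}{2mn+2j+1}=\int_0^1 x^{2mn}\frac{1+x^{2m}}{1+x^2}{\rm d}x$. Substituting this back and interchanging summation and integration, the double series becomes $\int_0^1 \frac{1+x^{2m}}{1+x^2}\sum_{n=1}^\infty (-1)^{n+1}H_n x^{2mn}{\rm d}x$. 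At this point I invoke the generating function $\sum_{n=1}^\infty (-1)^{n+1}H_n y^n=\frac{\log(1+y)}{1+y}$ with $y=x^{2m}$ (obtained from $\sum_{n\ge 1}H_n t^n=-\frac{\log(1-t)}{1-t}$ by putting $t=-x^{2m}$); the factor $1+x^{2m}$ cancels and the integrand collapses to $\frac{\log\left(x^{2m}+1\right)}{1+x^2}$, which is exactly the integral governed by (\ref{e:27}).

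The main obstacle is justifying the interchange of summation and integration, since at $x=1$ the series $\sum_{n\ge 1}(-1)^{n+1}H_n$ diverges in the ordinary sense and its value $\frac{1}{2}\log 2$ is only Abel-summable. To handle this I would bound the partial sums $S_N(y)=\sum_{n=1}^N(-1)^{n+1}H_n y^n$ by Abel summation: writing $H_n=\sum_{k=1}^n\frac{1}{k}$ and summing in $n$ first gives $\sum_{n=k}^N(-1)^{n+1}y^n=-\frac{(-y)^k-(-y)^{N+1}}{1+y}$, whose absolute value is at most $2y^k$ for $y\in[0,1]$, so that $|S_N(y)|\le 2\sum_{k=1}^N\frac{y^k}{k}\le -2\log(1-y)$ uniformly in $N$. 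Since $1-x^{2m}\sim 2m(1-x)$ as $x\to 1^-$, the function $\frac{1+x^{2m}}{1+x^2}\bigl(-2\log(1-x^{2m})\bigr)$ is integrable on $[0,1]$ and dominates every $\frac{1+x^{2m}}{1+x^2}S_N(x^{2m})$; moreover $S_N(x^{2m})\to\frac{\log(1+x^{2m})}{1+x^{2m}}$ for almost every $x\in[0,1]$. The dominated convergence theorem then legitimizes the limit, which identifies the double series with $\int_0^1\frac{\log\left(x^{2m}+1\right)}{x^2+1}{\rm d}x$ and, via (\ref{e:27}), completes the proof.
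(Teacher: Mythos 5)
Your proposal is correct and is essentially the paper's own route: the paper proves this theorem in one line by citing (\ref{e:27}) together with Theorem 1 of \cite{19}, and that cited theorem is precisely the sum--integral identity you establish by hand via the factorization $\frac{1+x^{2m}}{1+x^2}=\sum_{j=0}^{m-1}(-1)^jx^{2j}$ (valid for odd $m$) and the generating function $\sum_{n\geq 1}(-1)^{n+1}H_n y^n=\frac{\log(1+y)}{1+y}$ --- the very same expansion the paper itself uses when proving the analogous Theorem \ref{Theorem3.2}. Your dominated-convergence bound $\left|\sum_{n=1}^{N}(-1)^{n+1}H_n y^n\right|\leq -2\log(1-y)$ justifying the term-by-term integration is a welcome extra that the paper leaves implicit in the citation.
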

\begin{proof}
The proof follows from (\ref{e:27}) and Theorem 1 from \cite{19}.
\end{proof}

\begin{thm}\label{Theorem3.2}Let $m\geq 1$ be any odd integer. Then
\begin{align*}
&\sum_{n=1}^{\infty}(-1)^{n+1}H_n\sum_{p=0}^{m-1}\bigg[\frac{(-1)^p}{3mn+3p+1}+\frac{(-1)^p}{3mn+3p+2}\bigg]\\
&=\frac{m}{12}\left(\psi^\prime(5/6)-\psi^\prime(1/3)\right)-\frac{\pi^2m}{9}+\frac{\pi\sqrt{3}}{3}\log 3\\
&-\frac{\pi(m-1)\sqrt{3}}{9}\log 2-\frac{2\pi\sqrt{3}}{9}\bigg\{2\sum_{k=0}^{\frac{m-3}{2}}\log\left[\sin\left(\frac{1}{3}\sin^{-1}\sqrt{\frac{\varphi_k+1}{2}}\right)\right]\notag\\
&-\sum_{k=0}^{\frac{m-3}{2}}\log \left[2 \cos \left(\frac{2}{3} \sin ^{-1}\sqrt{\frac{1+\varphi_k}{2}}\right)+1\right]-\sum_{k=0}^{\frac{m-3}{2}}\log\left(1+\varphi_k\right)\bigg\}.
\end{align*}
\end{thm}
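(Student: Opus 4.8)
The plan is to observe that the right-hand side displayed in the statement coincides \emph{verbatim} with the right-hand side of \eqref{e:31} in Theorem \ref{Theorem2.5}. Consequently it suffices to prove the intermediate identity
\begin{equation*}
\int_0^1 \frac{(x+1)\log\left(x^{3m}+1\right)}{x^3+1}\,{\rm d}x=\sum_{n=1}^{\infty}(-1)^{n+1}H_n\sum_{p=0}^{m-1}(-1)^p\left[\frac{1}{3mn+3p+1}+\frac{1}{3mn+3p+2}\right],
\end{equation*}
and then to substitute the closed form furnished by \eqref{e:31}. This mirrors the proof of Theorem \ref{Theorem3.1}, where \eqref{e:27} was combined with the corresponding series representation; the present case needs the cubic analogue of that representation, which is not covered by Theorem~1 of \cite{19} and so must be established directly.

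To obtain the intermediate identity I would expand both factors of the integrand as power series on $[0,1)$. Since $\frac{x+1}{x^3+1}=\frac{1}{x^2-x+1}=\sum_{k=0}^{\infty}(-1)^k\left(x^{3k}+x^{3k+1}\right)$ and $\log\left(x^{3m}+1\right)=\sum_{j=1}^{\infty}\frac{(-1)^{j+1}}{j}x^{3mj}$, multiplying these and integrating termwise via $\int_0^1 x^a\,{\rm d}x=\frac{1}{a+1}$ yields the double series
\begin{equation*}
S=\sum_{k=0}^{\infty}\sum_{j=1}^{\infty}\frac{(-1)^{k+j+1}}{j}\left[\frac{1}{3(k+mj)+1}+\frac{1}{3(k+mj)+2}\right].
\end{equation*}

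The combinatorial heart of the argument is a reindexing that manufactures the harmonic numbers. Setting $\ell=k+mj$ and exploiting that $m$ is \emph{odd} (so that $j(1-m)$ is even and hence $(-1)^{k+j+1}=(-1)^{\ell+1}$), the double sum becomes $\sum_{j\ge1}\frac1j\sum_{\ell\ge mj}(-1)^{\ell+1}\big[\frac{1}{3\ell+1}+\frac{1}{3\ell+2}\big]$. Interchanging the order of summation and noting that the constraint $mj\le\ell$ is equivalent to $1\le j\le\lfloor \ell/m\rfloor$, the inner $j$-sum collapses to $H_{\lfloor \ell/m\rfloor}$, giving $S=\sum_{\ell\ge m}(-1)^{\ell+1}H_{\lfloor \ell/m\rfloor}\big[\frac{1}{3\ell+1}+\frac{1}{3\ell+2}\big]$. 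Finally, writing $\ell=mn+p$ with $n\ge1$ and $0\le p\le m-1$ (again using $m$ odd to identify $(-1)^{\ell+1}$ with $(-1)^{n+1}(-1)^p$) regroups this single sum into precisely the claimed double sum over $n$ and $p$.

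The main obstacle is analytic rather than algebraic: the two defining expansions are only conditionally convergent at $x=1$, and the double series $S$ is not absolutely convergent, so the termwise integration and the ensuing rearrangement require justification. I would perform the termwise integration first on $[0,1-\eps]$, where both expansions converge absolutely and uniformly, and then let $\eps\to0$, appealing to Abel's theorem together with the convergence of the resulting alternating series (the finiteness of the inner $p$-sum guarantees that the blocks indexed by $n$ are well defined and that the alternating-block series converges). Once these interchanges are secured, the reindexing above is a legitimate reordering within each convergent block, and combining the intermediate identity with \eqref{e:31} completes the proof.
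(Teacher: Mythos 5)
Your proposal is correct and is essentially the paper's own proof: the paper likewise reduces Theorem \ref{Theorem3.2} to the intermediate identity
\begin{equation*}
\int_0^1 \frac{(x+1)\log\left(x^{3m}+1\right)}{x^3+1}\,{\rm d}x=\sum_{n=1}^{\infty}(-1)^{n+1}H_n\sum_{p=0}^{m-1}(-1)^p\left[\frac{1}{3mn+3p+1}+\frac{1}{3mn+3p+2}\right]
\end{equation*}
and then invokes the closed form of Theorem \ref{Theorem2.5} (the paper's citation of \eqref{e:47} at that point is evidently a slip for \eqref{e:31}, which is what you correctly use). The only difference is cosmetic: the paper manufactures the harmonic numbers before integrating, via the factorization $\frac{\log(x^{3m}+1)}{x^3+1}=\frac{1-(-x^3)^m}{1+x^3}\cdot\frac{\log(x^{3m}+1)}{1+x^{3m}}$ and a Cauchy product, whereas you integrate the elementary expansions termwise first and then perform the equivalent diagonal regrouping $\ell=k+mj$ on the resulting double sum.
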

\begin{proof}
We have
\begin{equation*}
\frac{\log\left(x^{3m}+1\right)}{x^3+1}=\frac{1-\left(-x^3\right)^{m}}{1-\left(-x^3\right)}\cdot\frac{\log\left(x^{3m}+1\right)}{1-\left(-x^3\right)^{m}}.
\end{equation*}
By the Cauchy product formula for two convergent series this leads to
\begin{align*}
\frac{\log\left(x^{3m}+1\right)}{x^3+1}&=\frac{1-\left(-x^3\right)^{m}}{1+x^3}\cdot\frac{\log\left(x^{3m}+1\right)}{1-\left(-x^3\right)^{m}}\\
&=\sum_{p=0}^{m-1}(-1)^px^{3p}\sum_{n=0}^{\infty}\sum_{k=0}^{n}\frac{(-1)^{n}x^{3(m-1)(n+1)+3(n+1)}}{k+1}\\
&=\sum_{n=1}^{\infty}(-1)^{n+1}H_n\sum_{p=0}^{m-1}(-1)^px^{3(m-1)n+3n+3p}.
\end{align*}
Multiplying both sides by $x+1$ and then integrating the resulting equality  over $(0,1)$ the conclusion follows from (\ref{e:47}).
\end{proof}
The proof of the next  two theorems can be done similarly using (\ref{e:37}) and Corollary  \ref{Corollary1}. We omit them.
\begin{thm}\label{Theorem3.3} Let $m\geq 1$ be any positive odd integer. Then
\begin{align*}
&\sum_{n=1}^{\infty}(-1)^{n+1}H_n\sum_{p=0}^{m-1}\bigg[\frac{(-1)^p}{4mn+4p+3}+\frac{(-1)^p}{4mn+4p+1}\bigg]\\
&=\frac{m}{16}\left(\psi^\prime(7/8)-\psi^\prime(3/8)\right)-\frac{\pi^2m\sqrt{2}}{8}\notag\\
&+\frac{\pi (m+2)\sqrt{2}}{4}\log 2+\frac{\pi\sqrt{2}}{4}\bigg\{\sum_{k=0}^{\frac{m-3}{2}}\log\left(1+\sqrt{\frac{1-\varphi_k}{2}}\thinspace\right)\notag\\
&+2\sum_{k=0}^{\frac{m-3}{2}}\log\left(\sqrt{2}+\sqrt{1+\sqrt{\frac{1-\varphi_k}{2}}}\thinspace\right)\bigg\}.
\end{align*}
\end{thm}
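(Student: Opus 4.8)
The plan is to notice that the right-hand side of the claimed identity is \emph{literally} the right-hand side of (\ref{e:37}). Hence it suffices to show that
\[
\int_0^1 \frac{(x^2+1)\log\left(x^{4m}+1\right)}{x^4+1}\,{\rm d}x
\]
equals the double series on the left-hand side, after which Theorem \ref{Theorem2.6} supplies the closed form. The argument runs exactly parallel to the proof of Theorem \ref{Theorem3.2}, with $q=3$ replaced by $q=4$, so that the weight $x^{q-2}+1$ is now $x^2+1$ rather than $x+1$. (The reference to Corollary \ref{Corollary1} in the remark preceding the theorem pertains to the companion statement, not to this one.)

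First I would factor, using that $m$ is odd and therefore $(-x^4)^m=-x^{4m}$,
\[
\frac{\log\left(x^{4m}+1\right)}{x^4+1}
=\frac{1-(-x^4)^m}{1-(-x^4)}\cdot\frac{\log\left(x^{4m}+1\right)}{1-(-x^4)^m}
=\frac{1+x^{4m}}{1+x^4}\cdot\frac{\log\left(x^{4m}+1\right)}{1+x^{4m}}.
\]
The first factor is the finite geometric sum $\sum_{p=0}^{m-1}(-1)^p x^{4p}$. For the second, setting $y=x^{4m}$ and forming the Cauchy product of $\log(1+y)=\sum_{k\geq1}\frac{(-1)^{k+1}}{k}y^k$ with $(1+y)^{-1}=\sum_{j\geq0}(-1)^j y^j$ gives $\frac{\log(1+y)}{1+y}=\sum_{n\geq1}(-1)^{n+1}H_n y^n$. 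Multiplying the two expansions yields
\[
\frac{\log\left(x^{4m}+1\right)}{x^4+1}=\sum_{n=1}^{\infty}(-1)^{n+1}H_n\sum_{p=0}^{m-1}(-1)^p x^{4mn+4p}.
\]

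Next I would multiply through by $x^2+1$ and integrate over $(0,1)$. Since $\int_0^1 x^{4mn+4p+2}\,{\rm d}x=\tfrac{1}{4mn+4p+3}$ and $\int_0^1 x^{4mn+4p}\,{\rm d}x=\tfrac{1}{4mn+4p+1}$, term-by-term integration produces precisely
\[
\sum_{n=1}^{\infty}(-1)^{n+1}H_n\sum_{p=0}^{m-1}(-1)^p\left(\frac{1}{4mn+4p+3}+\frac{1}{4mn+4p+1}\right),
\]
which is the left-hand side of the theorem; equating it with (\ref{e:37}) completes the proof.

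The step I expect to be the main obstacle is the rigorous justification of this term-by-term integration, because the series $\sum_{n\geq1}(-1)^{n+1}H_n x^{4mn}$ converges only conditionally as $x\to1^-$ and its partial sums are not uniformly bounded near $x=1$. Unlike the situation in Theorem \ref{Theorem1}, where the relevant expansion has nonnegative terms and the interchange is immediate by monotone convergence (Tonelli), the alternating signs here force an Abel-type argument: I would integrate the partial-sum identity over $[0,r]$ with $r<1$, where uniform convergence of the power series is automatic, and then let $r\to1^-$. On the integral side the limit is controlled by the continuity (hence boundedness) of $(x^2+1)\log(x^{4m}+1)/(x^4+1)$ on $[0,1]$, while on the series side the passage to the limit is secured by Abel's theorem, the target double series being convergent.
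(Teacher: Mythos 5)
Your proposal is correct and is essentially the paper's own (omitted) argument: the paper states that Theorem \ref{Theorem3.3} follows from (\ref{e:37}) by the same route as Theorem \ref{Theorem3.2}, namely the geometric factorization $\frac{1-(-x^4)^m}{1-(-x^4)}$ times the Cauchy-product expansion $\frac{\log(1+y)}{1+y}=\sum_{n\geq 1}(-1)^{n+1}H_n y^n$, followed by term-by-term integration against $x^2+1$ over $(0,1)$, exactly as you do. Your Abel-summation justification of the interchange of summation and integration is in fact more careful than the paper, which performs that step without comment.
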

\begin{thm}\label{Theorem3.4}Let $q>1$ be any real number. Then
\begin{align*}
&\sum_{n=1}^{\infty}(-1)^{n+1}H_n\left[\frac{1}{qn+q-1}+\frac{1}{qn+1}\right]\\
&=\frac{1}{4q}\left(\psi^\prime\left(\frac{2q-1}{2q}\right)-\psi^\prime\left(\frac{q-1}{2q}\right)\right)-\frac{\pi}{q\sin\frac{\pi}{q}}\left(\psi\left(1-\frac{1}{q}\right)-\psi(1)\right).
\end{align*}
\end{thm}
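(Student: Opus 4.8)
The right-hand side of the asserted identity is \emph{verbatim} the right-hand side of Corollary \ref{Corollary1}, so the whole plan is to show that the series on the left equals the integral appearing in (\ref{e:25}); the theorem then follows in one line. The bridge is the classical generating function for the harmonic numbers,
\[
\frac{\log(1-t)}{1-t}=-\sum_{n=1}^{\infty}H_n t^n \qquad (|t|<1),
\]
which I would specialize at $t=-x^q$ for $0\le x<1$. Since $H_0=0$, this gives the clean expansion
\[
\frac{\log\left(x^q+1\right)}{x^q+1}=\sum_{n=1}^{\infty}(-1)^{n+1}H_n\,x^{qn},
\]
precisely the $m=1$ instance of the Cauchy-product expansion already exploited in the proof of Theorem \ref{Theorem3.2}.

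Next I would multiply both sides by the weight $x^{q-2}+1$ occurring in (\ref{e:25}), obtaining
\[
\frac{\left(x^{q-2}+1\right)\log\left(x^q+1\right)}{x^q+1}=\sum_{n=1}^{\infty}(-1)^{n+1}H_n\left(x^{qn+q-2}+x^{qn}\right),
\]
and integrate term by term over $(0,1)$. The two elementary integrals $\int_0^1 x^{qn+q-2}{\rm d}x=\frac{1}{qn+q-1}$ and $\int_0^1 x^{qn}{\rm d}x=\frac{1}{qn+1}$ (both exponents exceed $-1$ for $q>1$, $n\ge 1$) reproduce \emph{exactly} the bracketed factor in the statement, so the interchange would yield
\[
\int_0^1\frac{\left(x^{q-2}+1\right)\log\left(x^q+1\right)}{x^q+1}{\rm d}x=\sum_{n=1}^{\infty}(-1)^{n+1}H_n\left[\frac{1}{qn+q-1}+\frac{1}{qn+1}\right].
\]
Comparing the left member with the closed form (\ref{e:25}) from Corollary \ref{Corollary1} then completes the argument.

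The one delicate point, and the main obstacle, is the legitimacy of the term-by-term integration on the \emph{closed} interval: the power series $\sum(-1)^{n+1}H_n x^{qn}$ converges only for $x<1$ and in fact fails at the endpoint $x=1$ (there $H_n\not\to 0$), so there is no naive uniform convergence on $[0,1]$. I would resolve this just as the paper does elsewhere, by working with the exact finite identity for the partial sums $S_N(x)=\sum_{n=1}^N(-1)^{n+1}H_n x^{qn}$, namely $\int_0^1(x^{q-2}+1)S_N(x){\rm d}x=\sum_{n=1}^N(-1)^{n+1}H_n\big[\tfrac{1}{qn+q-1}+\tfrac{1}{qn+1}\big]$, and then letting $N\to\infty$. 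The passage to the limit inside the integral is justified by dominated convergence: a summation-by-parts (Abel) bound makes $S_N(x)$ uniformly bounded on each $[0,1-\delta]$, and the integrable majorant $C\,(x^{q-2}+1)$—integrable on $(0,1)$ since $q>1$—controls the remaining contribution, the single boundary point being negligible. With the interchange secured, the identity is immediate.
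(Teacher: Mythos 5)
Your overall route is exactly the paper's intended one: the paper omits this proof, indicating only that it follows from Corollary \ref{Corollary1} in the manner of Theorem \ref{Theorem3.2}, and that is precisely your argument --- expand $\log(x^q+1)/(x^q+1)=\sum_{n\geq 1}(-1)^{n+1}H_n x^{qn}$ via the harmonic-number generating function, multiply by $x^{q-2}+1$, integrate termwise, and compare with (\ref{e:25}).

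However, the justification you give at the step you yourself single out as the main obstacle is incorrect as stated. Dominated convergence requires a single integrable function dominating $(x^{q-2}+1)S_N(x)$ for \emph{all} $N$ on all of $(0,1)$, and $C(x^{q-2}+1)$ is not such a function for any finite $C$: each $S_N$ is a polynomial, hence continuous at $x=1$, so a bound $|S_N(x)|\leq C$ on $[0,1)$ uniform in $N$ would force $|S_N(1)|\leq C$ for every $N$, whereas
\begin{equation*}
S_{2M}(1)=\sum_{n=1}^{2M}(-1)^{n+1}H_n=\sum_{k=1}^{M}\left(H_{2k-1}-H_{2k}\right)=-\frac{1}{2}H_M\longrightarrow -\infty .
\end{equation*}
So the obstruction is not the single boundary point (a null set) but a full neighborhood of $x=1$: for every $\delta>0$ and every $C$ there are $N$ and $x\in(1-\delta,1)$ with $|S_N(x)|>C$; in fact $\sup_N|S_N(x)|$ grows like $\log\frac{1}{1-x}$ as $x\to 1^-$. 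The interchange itself is valid and can be repaired in either of two standard ways: (i) a summation-by-parts estimate gives $\sup_N|S_N(x)|\leq C\left(1+\log\frac{1}{1-x^q}\right)$, and $(x^{q-2}+1)\left(1+\log\frac{1}{1-x^q}\right)$ \emph{is} integrable on $(0,1)$, so dominated convergence applies with this logarithmic majorant; or (ii) avoid dominated convergence altogether by integrating over $[0,r]$ with $r<1$, where the power series converges uniformly, and then letting $r\to 1^-$ via Abel's theorem, the series $\sum_{n\geq 1}(-1)^{n+1}H_n\left[\frac{1}{qn+q-1}+\frac{1}{qn+1}\right]$ being convergent by the alternating series test since its terms decrease to $0$ for large $n$. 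With either repair your proof is complete and coincides with the proof the paper intends.
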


\section{Examples}
In this section we provide some examples by taking particular values for the quantities  $q$ and $m$ from Theorem \ref{Theorem1} and Theorems \ref{Theorem2.4}--\ref{Theorem2.11} and  Theorems \ref{Theorem3.1}--\ref{Theorem3.4}. We also give some harmonic sum evaluations.
\begin{exam}\label{Example1} Setting $m=1$ in (\ref{e:27}) and noticing that $\psi(1/2)-\psi(1)=-2\log 2$, $\psi^\prime(1/4)=\pi^2+8G$ and $\psi^\prime(3/4)=\pi^2-8G$ we arrive at the following formula
\allowdisplaybreaks
\begin{align*}
\int_{0}^{1}\frac{\log\left(x^2+1\right)}{x^2+1}{\rm d }x&=\frac{\pi}{2}\log 2-G.
\end{align*}
\end{exam}
\begin{exam}\label{Example2}Setting $m=1$ in (\ref{e:31}) and taking into account $\psi(2/3)-\psi(1)=\frac{\pi\sqrt{3}\log 3}{3}-\frac{\pi^2}{9}$, we get
\begin{align*}
\int_{0}^{1}\frac{(x+1)\log\left(x^3+1\right)}{x^3+1}{\rm d }x=\frac{\pi  \sqrt{3} \log 3}{3}-\frac{\pi ^2}{9}+\frac{1}{12} \left[\psi ^{\prime}\left(5/6\right)-\psi ^{\prime}\left(1/3\right)\right].
\end{align*}
\end{exam}

\begin{exam}\label{Example3}If we put $m=1$ in (\ref{e:37}) we obtain
\begin{align*}
\int_{0}^{1}\frac{(x^2+1)\log\left(x^4+1\right)}{x^4+1}{\rm d }x&=\frac{1}{16}\left(\psi^\prime(7/8)-\psi^\prime(3/8)\right)\\
&-\frac{\pi^2\sqrt{2}}{8}+\frac{3\pi\sqrt{2}\log 2}{4}.
\end{align*}
\end{exam}

\begin{exam}\label{Example4} We have the following formula
\begin{align*}
\int_{0}^{1}\frac{(x^2+x)\log\left(x^5+1\right)}{x^5+1}{\rm d }x&=\frac{1}{20}\left(\psi^\prime\left(\frac{4}{5}\right)-\psi^\prime\left(\frac{3}{10}\right)\right)\\
&-\frac{4\pi}{5\sqrt{10+2\sqrt{5}}}(\psi(3/5)-\psi(1)).
\end{align*}
\end{exam}
\begin{proof}
Setting $q=5/2$ and $m=1$ in (\ref{e:15}) we get  
\begin{align}\label{e:49}
\int_{0}^{1}\frac{(\sqrt{x}+1)\log\left(x^{5/2}+1\right)}{x^{5/2}+1}{\rm d }x&=\frac{1}{10}\left(\psi^\prime(4/5)-\psi^\prime(3/10)\right)\notag\\
&-\frac{2\pi}{5\sin(2\pi/5)}(\psi(3/5)-\psi(1)).
\end{align}
Making  the change of variable $x=u^2$  and then putting $u=x$ here, we get
\begin{align}\label{e:50}
\int_{0}^{1}\frac{(\sqrt{x}+1)\log\left(x^{5/2}+1\right)}{x^{5/2}+1}{\rm d }x&=2\int_{0}^{1}\frac{(x^2+x)\log\left(x^5+1\right)}{x^5+1}{\rm d}x.
\end{align}
Combining the equations (\ref{e:49}) and (\ref{e:50}) and the fact $\sin(2\pi/5)=\frac{1}{4}\sqrt{10+2\sqrt{5}}$ the desired result follows.
\end{proof}
\begin{exam}\label{Example5}Putting $m=2$ in (\ref{e:26}) and using $\varphi_0=0$, we get the following formula:
\begin{align*}
\int_{0}^{1}\frac{\log\left(x^4+1\right)}{x^2+1}{\rm d }x&=\frac{\pi}{2}\log \left(2+\sqrt{2}\right)-2G
\end{align*}
\end{exam}

\begin{exam}\label{Example6} Putting $m=2$ in (\ref{e:30}) and using $\sin\frac{\pi}{12}=\frac{\sqrt{6}-\sqrt{2}}{4}$ we have
\begin{align*}
&\int_{0}^{1}\frac{(x+1)\log\left(x^6+1\right)}{x^3+1}{\rm d }x=\frac{1}{6}\left(\psi^\prime(5/6)-\psi^\prime(1/3)\right)-\frac{2\pi^2}{9}\\
&-\frac{2\pi\sqrt{3}}{9}\log\bigg(\frac{5-3\sqrt{3}}{4}\bigg).
\end{align*}
\end{exam}

\begin{exam}\label{Example7} Putting $m=2$ in (\ref{e:36}) we can have
\begin{align*}
&\int_{0}^{1}\frac{(x^2+1)\log\left(x^8+1\right)}{x^4+1}{\rm d }x=\frac{1}{8}\left(\psi^\prime(7/8)-\psi^\prime(3/8)\right)-\frac{\pi^2\sqrt{2}}{4}\\
&+\frac{\pi\sqrt{2}}{4} \log\left(14+8\sqrt{2}+4\sqrt{20+14\sqrt{2}}\right).
\end{align*}
\end{exam}

\begin{exam}\label{Example8}Setting $m=3$ in (\ref{e:27}) we get
\begin{align*}
\int_{0}^{1}\frac{\log\left(x^6+1\right)}{x^2+1}{\rm d }x=\frac{\pi}{2}\log 6-3G.
\end{align*}
\end{exam}
\begin{exam}\label{Example9} For $m=3$ in (\ref{e:31}) we have
\begin{align*}
&\int_{0}^{1}\frac{(x+1)\log\left(x^9+1\right)}{x^3+1}{\rm d }x=\frac{5\pi\sqrt{3}\log 3}{9}+\frac{1}{4}\left(\psi^\prime(5/6)-\psi^\prime(1/3)\right)\\
&-\frac{\pi^2}{3}-\frac{4\pi\sqrt{3}}{9}\log 2-\frac{2\pi\sqrt{3}}{9}\bigg[2 \log \left(\sin \left(\frac{\pi }{9}\right)\right)-\log \left(1+2 \cos \left(\frac{2 \pi }{9}\right)\right)\bigg].
\end{align*}
\end{exam}

\begin{exam}\label{Example10}For $m=3$ in (\ref{e:37}) we have
\begin{align*}
\int_{0}^{1}\frac{\left(x^2+1\right)\log\left(x^{12}+1\right)}{x^4+1}{\rm d }x&=\frac{3}{16}\left(\psi^\prime(7/8)-\psi^\prime(3/8)\right)-\frac{3\pi^2\sqrt{2}}{8}\\
&+\frac{\pi\sqrt{2}}{4}\log 24+\frac{\pi\sqrt{2}}{2}\log\left(2+\sqrt{3}\right).
\end{align*}
\end{exam}
\begin{exam}\label{Example11} We have
\begin{align*}
\int_{0}^{1}\frac{(x^2+1)\log\left(x^{2}+1\right)}{x^4+1}{\rm d }x&=\frac{1}{32} \left(\psi ^\prime\left(\frac{7}{8}\right)-\psi ^\prime\left(\frac{3}{8}\right)\right)\\
&-\frac{\pi ^2 \sqrt{2}}{16}+\frac{\pi  \sqrt{2}}{4}  \log \left(2+\sqrt{2}\right).
\end{align*}
\end{exam}
\begin{proof} Putting $q=2$ in Theorem \ref{Theorem2.7}, we get
\begin{align}\label{e:51}
&\int_{0}^{1}\frac{\left(x^2+1\right)\log \left(x^2+1\right)}{x^4+1}{\rm d }x=\frac{1}{32}\left(\psi^{\prime}\left(\frac{7}{8}\right)-\psi^{\prime}\left(\frac{3}{8}\right)\right)\notag \\
&-\frac{1}{2}\underbrace{\sum_{n=0}^{\infty}\frac{2^n\Gamma\left(n+\frac{3}{2}\right)\Gamma\left(n+\frac{1}{2}\right)}{(2n+1)!}\left[\psi\left(n+\frac{3}{2}\right)-\psi\left(2n+2\right)\right]}_{Q}.
\end{align}

Applying (\ref{e:8}), the relation  $H_{n+1}=H_n+\frac{1}{n+1}$ and the duplication formulas  (\ref{e:5}) and (\ref{e:10}) for the gamma and digamma functions, this sum can be simplified  as follows:
\begin{align*}
&Q=\frac{\pi}{2}\sum_{n=0}^{\infty}\frac{1}{8^n}\binom{2n}{n}\left[H_{2n}-H_n-2\log 2+\frac{1}{2n+1}\right]
\end{align*}
or
\begin{align}\label{e:52}
&Q=-\pi\log 2\sum_{n=0}^{\infty}\frac{1}{8^n}\binom{2n}{n}+\frac{\pi}{2}\sum_{n=0}^{\infty}\frac{(1/8)^n}{2n+1}\binom{2n}{n}\notag\\
&+\frac{\pi}{2}\sum_{n=0}^{\infty}\binom{2n}{n}\frac{H_{2n}-H_n}{8^n}.
\end{align}
In \cite[Theorem 3]{10} it has been proved that
\begin{equation}\label{e:53}
\sum_{k=0}^{\infty}\binom{2k}{k}\left(H_{2k}-H_k\right)x^k=-\frac{1}{\sqrt{1-4x}}\log\left(\frac{1+\sqrt{1-4x}}{2}\right),\ (|x|<\frac{1}{4}).
\end{equation}
Using this formula for $x=1/8$ one gets
\begin{equation}\label{e:54}
\sum_{n=0}^{\infty}\binom{2n}{n}\frac{H_{2n}-H_n}{8^n}=-\sqrt{2}\log\left(\frac{2+\sqrt{2}}{4}\right).
\end{equation}
It is well known that
\begin{equation}\label{e:55}
\sum_{n=0}^{\infty}\frac{1}{8^n}\binom{2n}{n}=\sqrt{2}
\end{equation}
and
\begin{equation}\label{e:56}
\sum_{n=0}^{\infty}\frac{(1/8)^n}{2n+1}\binom{2n}{n}=\frac{\pi\sqrt{2}}{4}.
\end{equation}
Combining the identities  (\ref{e:52}), (\ref{e:54}), (\ref{e:55}) and (\ref{e:56}) we get
\begin{align}\label{e:57}
Q=-\frac{\pi ^2 \sqrt{2}}{16}+\frac{\pi  \sqrt{2}}{4}\log \left(2+\sqrt{2}\right).
\end{align}

We therefore conclude from (\ref{e:51}) and (\ref{e:57})  that
\begin{align*}
\int_{0}^{1}\frac{(x^2+1)\log\left(x^{2}+1\right)}{x^4+1}{\rm d }x&=\frac{1}{32} \left(\psi ^\prime\left(\frac{7}{8}\right)-\psi ^\prime\left(\frac{3}{8}\right)\right)\\
&-\frac{\pi ^2 \sqrt{2}}{16}+\frac{\pi  \sqrt{2}}{4}  \log \left(2+\sqrt{2}\right).
\end{align*}
\end{proof}
\begin{rem}
We note that a generalization of (\ref{e:53}) can be found in  \cite[Eq. (3.6)]{17}
\end{rem}

\begin{exam}\label{Example12} We have that
\begin{align*}
\sum_{n=0}^{\infty}\frac{2^n(H_{2n+1}-H_n)}{(2n+1)\binom{2n}{n}}=\frac{\pi}{4}\log 2+G.
\end{align*}
\end{exam}
\begin{proof} In \cite[Identity 4.7]{6} it has been showed that
\begin{equation*}
\sum_{k=0}^{n}\binom{2k}{k}\binom{2n-2k}{n-k}\frac{1}{(2k+1)^2}=\frac{16^n(H_{2n+1}-H_n)}{(2n+1)\binom{2n}{n}}.
\end{equation*}
Dividing both sides by $8^n$, summing over $n$, and using the Cauchy product formula for two convergent series, we get
\begin{equation*}
\bigg(\sum_{n=0}^{\infty}\binom{2n}{n}\frac{1}{8^n}\bigg)\bigg(\sum_{n=0}^{\infty}\binom{2n}{n}\frac{1}{8^n(2n+1)^2}\bigg)=\sum_{n=0}^{\infty}\frac{2^n(H_{2n+1}-H_n)}{(2n+1)\binom{2n}{n}}.
\end{equation*}
In \cite[Eq. 21]{8} it has been shown that
\begin{equation*}
\sum_{n=0}^{\infty}\binom{2n}{n}\frac{1}{8^n(2n+1)^2}=\frac{\sqrt{2}G}{2}+\frac{\sqrt{2}\pi\log 2}{8}.
\end{equation*}
Applying this and $\sum_{n=0}^{\infty}\binom{2n}{n}\frac{1}{8^n}=\sqrt{2}$ the conclusion follows.
\end{proof}
From \cite[Eq.(44)]{1} we have
\begin{align*}
\sum_{n=0}^{\infty}\frac{2^nH_n}{(2n+1)\binom{2n}{n}}=2G-\frac{\pi}{2}\log 2,
\end{align*}
so that, combining this with Example \ref{Example12} we get
\begin{exam}\label{Example13}
We have
\begin{align*}
\sum_{n=0}^{\infty}\frac{2^nH_{2n+1}}{(2n+1)\binom{2n}{n}}=3G-\frac{\pi}{4}\log 2.
\end{align*}
\end{exam}

\begin{exam}\label{Example14} We have the following formula
\begin{align*}
\int_{0}^{1}\frac{x^2\log\left(x^{2}+1\right)}{x^6+1}{\rm d }x=\frac{\pi}{6}  (\log 3-\log 2)-\frac{ 1}{9}G.
\end{align*}
\end{exam}
\begin{proof}
A simple change of variable leads to

\begin{align*}
2\int_{0}^{1}\frac{\log\left(x^{2/3}+1\right)}{x^2+1}{\rm d }x=6\int_{0}^{1}\frac{x^2\log\left(x^2+1\right)}{x^6+1}{\rm d}x.
\end{align*}
Setting $q=\frac{2}{3}$ in Theorem \ref{Theorem2.8} we get
\begin{align*}
2\int_{0}^{1}\frac{\log\left(x^{2/3}+1\right)}{x^2+1}{\rm d }x&=\frac{1}{24}\left(\psi^\prime(3/4)-\psi^\prime(1/4)\right)\\
&+\frac{3}{2}\sum_{n=0}^{\infty}\frac{3^n\Gamma\left(n+\frac{3}{2}\right)^2}{(2n+2)!}\left(\psi(2n+3)-\psi\left(n+\frac{3}{2}\right)\right).
\end{align*}
Making the change of variable $x=u^3$, using the duplication formulas for the gamma and digamma functions, and finally (\ref{e:5}) and $\psi^\prime(3/4)-\psi^\prime(1/4)=8G$ we are led to
\begin{align}\label{e:58}
&6\int_{0}^{1}\frac{x^2\log\left(x^2+1\right)}{x^6+1}{\rm d }x=-\frac{3\pi}{16}\sum_{n=0}^{\infty}\frac{(3/16)^n(2n+1)}{n+1}\binom{2n}{n}(H_{2n}-H_n)\notag\\
&+\frac{3\pi}{8}\log2\sum_{n=0}^{\infty}\frac{(3/16)^n(2n+1)}{n+1}\binom{2n}{n}-\frac{3\pi^2}{16}\sum_{n=0}^{\infty}\frac{(3/16)^n}{n+1}\binom{2n}{n}\notag\\
&+\frac{3\pi}{32}\sum_{n=0}^{\infty}\frac{(3/16)^n(2n+1)}{(n+1)^2}\binom{2n}{n}-\frac{2}{3}G.
\end{align}
We immediately have
\begin{align}\label{e:59}
&\sum_{n=0}^{\infty}\frac{(3/16)^n(2n+1)}{n+1}\binom{2n}{n}(H_{2n}-H_n)\notag\\
&=2\sum_{n=0}^{\infty}\left(\frac{3}{16}\right)^n\binom{2n}{n}(H_{2n}-H_n)-\sum_{n=0}^{\infty}\frac{(3/16)^n}{n+1}\binom{2n}{n}(H_{2n}-H_n).
\end{align}
Integrating both sides of (\ref{e:53})  from $x=0$  to $x=3/16$,  it follows that
\begin{equation*}
\sum_{n=0}^{\infty}\binom{2n}{n}\frac{(3/16)^n\left(H_{2n}-H_n\right)}{n+1}=\frac{16}{3}\int_{0}^{3/16}\frac{-1}{\sqrt{1-4x}}\log\left(\frac{1+\sqrt{1-4x}}{2}\right){\rm d}x.
\end{equation*}
The software \textit{Mathematica} can evaluate this integral and it gives us $\frac{1}{4}-\frac{3}{2}\log 2+\frac{3}{4}\log 3$. Hence,

\begin{equation*}
\sum_{n=0}^{\infty}\binom{2n}{n}\frac{(3/16)^n\left(H_{2n}-H_n\right)}{n+1}=\frac{4}{3}-8\log 2+4\log 3.
\end{equation*}
A direct computation of (\ref{e:53}) at $x=\frac{3}{16}$ leads to

\begin{equation}\label{e:60}
\sum_{n=0}^{\infty}\left(\frac{3}{16}\right)^n\binom{2n}{n}\left(H_{2n}-H_n\right)=4\log 2-2\log 3.
\end{equation}
Using the last two sums in (\ref{e:59}), we obtain

\begin{align}\label{e:61}
&\sum_{n=0}^{\infty}\frac{(3/16)^n(2n+1)}{n+1}\binom{2n}{n}(H_{2n}-H_n)=16 \log 2-8 \log 3-\frac{4}{3}.
\end{align}
Since
\begin{equation}\label{e:62}
\sum _{n=0}^{\infty } \left(\frac{3}{16}\right)^n \binom{2 n}{n}=2\quad \mbox{and}\quad \sum _{n=0}^{\infty } \frac{\left(\frac{3}{16}\right)^n \binom{2 n}{n}}{n+1}=\frac{4}{3}
\end{equation}
it follows that
\begin{align}\label{e:63}
\sum_{n=0}^{\infty}\frac{(3/16)^n(2n+1)}{n+1}\binom{2n}{n}=\frac{8}{3}.
\end{align}
\textit{Mathematica} gives

\begin{equation}\label{e:64}
\sum_{n=0}^{\infty}\frac{x^n}{n+1}\binom{2n}{n}=\frac{1-\sqrt{1-4 x}}{2 x}.
\end{equation}

We have
\begin{equation}\label{Eq4.17}
\sum_{n=0}^{\infty}\frac{(3/16)^n(2n+1)}{(n+1)^2}\binom{2n}{n}=2\sum_{n=0}^{\infty}\frac{(3/16)^n}{n+1}\binom{2n}{n}-\sum_{n=0}^{\infty}\frac{(3/16)^n}{(n+1)^2}\binom{2n}{n}.
\end{equation}
Integrating both sides of (\ref{e:64}) over $[0,3/16]$ we get
\begin{equation*}
\sum_{n=0}^{\infty}\frac{(3/16)^n}{(n+1)^2}\binom{2n}{n}=\frac{16}{3}\int_{0}^{3/16}\frac{1-\sqrt{1-4 x}}{2 x}{\rm d}x.
\end{equation*}
\textit{Mathematica} can evaluate this integral and
\begin{equation}\label{e:65}
\sum_{n=0}^{\infty}\frac{(3/16)^n}{(n+1)^2}\binom{2n}{n}=\frac{8}{3}+\frac{16}{3}\log 3-\frac{32}{3}\log 2,
\end{equation}
So it follows from (\ref{e:62}) and (\ref{Eq4.17}) that
\begin{equation}\label{e:66}
\sum_{n=0}^{\infty}\frac{(3/16)^n(2n+1)}{(n+1)^2}\binom{2n}{n}=\frac{32}{3}\log 2-\frac{16}{3}\log 3.
\end{equation}
Using the equations (\ref{e:61}), (\ref{e:62}), (\ref{e:63}) and (\ref{e:65}) in (\ref{e:58}) the conclusion follows.
\end{proof}
\begin{exam}\label{Example15} We have
\begin{align*}
\sum _{n=1}^{\infty } \frac{3^n \left(H_{2 n}-H_n\right)}{n \binom{2 n}{n}}=\sqrt{3} \left(Cl_2\left(\frac{2\pi}{3}\right)+\frac{\pi\log 3}{3}\right)-\frac{\pi ^2}{9},
\end{align*}
where $Cl_2(\varphi)=\sum_{n=1}^{\infty}\frac{\sin(\varphi n)}{n^2}$ is the Clausen's function of order 2.
\begin{proof}
We evaluate the formula \cite[Eq. (2-10)]{1} for $f(i)=\frac{1}{i}$. This gives
\begin{align}\label{e:67}
\sum _{n=1}^{\infty } \frac{3^n H_n}{n\binom{2n}{n}}=\sum_{n=1}^{\infty}\frac{3^n}{n^2\binom{2n}{n}}+\sqrt{3}\int_{0}^{3}\frac{2\arcsin(\sqrt{t}/2)}{4-t}{\rm d}t.
\end{align}
\textit{Mathematica} gives us

\begin{align}\label{e:68}
\sum_{n=1}^{\infty}\frac{3^n}{n^2\binom{2n}{n}}=\frac{2\pi^2}{9},
\end{align}
and
\begin{align}\label{e:69}
\int_{0}^{3}\frac{2\arcsin(\sqrt{t}/2)}{4-t}{\rm d }t=Cl_2\left(\frac{\pi}{3}\right).
\end{align}
Using (\ref{e:68}) and (\ref{e:69}) in (\ref{e:67}) we see that
\begin{align}\label{e:70}
\sum_{n=1}^{\infty}\frac{3^nH_n}{n\binom{2n}{n}}=\frac{2\pi^2}{9}+2\sqrt{3}Cl_2\left(\frac{\pi}{3}\right).
\end{align}
Applying \cite[Eq. (2-10)]{1} for $f(i)=\frac{1}{2i+1}$ and $x=3$, one can get

\begin{align}\label{e:71}
\sum_{k=1}^{\infty}\frac{3^k\sum_{i=1}^{k}\frac{1}{2i+1}}{k\binom{2k}{k}}&=\sum_{k=1}^{\infty}\frac{3^k}{k(2k+1)\binom{2k}{k}}\notag\\
&+\sqrt{3}\int_{0}^{3}\frac{\sum_{k=1}^{\infty}\frac{t^k}{(2k+1)\binom{2k}{k}}}{\sqrt{t(4-t)}}{\rm d}t.
\end{align}
\textit{Mathematica} gives
\begin{equation}\label{e:72}
\sum_{k=1}^{\infty}\frac{t^k}{(2k+1)\binom{2k}{k}}=\frac{4\sin^{-1}(\sqrt{t}/2)}{\sqrt{t(4-t)}}-1.
\end{equation}
and
\begin{align}\label{e:73}
\sum _{n=1}^{\infty } \frac{3^n}{n (2 n+1) \binom{2 n}{n}}=2-\frac{2\pi\sqrt{3}}{9}.
\end{align}
Using (\ref{e:72}) and (\ref{e:73}) in (\ref{e:71}) and noting that $\sum_{i=1}^{k}\frac{1}{2i+1}=H_{2k+1}-\frac{1}{2}H_k-1$, we can get, after simplifying
\begin{align}\label{e:74}
&\sum_{k=1}^{\infty}\frac{3^k\left(H_{2k}-\frac{1}{2}H_k\right)}{k\binom{2k}{k}}=\sum_{k=1}^{\infty}\frac{3^k}{k\binom{2k}{k}}\notag\\
&+\sqrt{3}\int_{0}^{x}\frac{4\sin^{-1}(\sqrt{t}/2)}{t(4-t)}-\sqrt{3}\int_{0}^{3}\frac{{\rm d}t}{\sqrt{t(4-t)}}.
\end{align}
Using
\begin{equation*}
\sum _{n=1}^{\infty } \frac{3^n}{n \binom{2 n}{n}}=\frac{2 \pi }{\sqrt{3}},
\end{equation*}
\begin{equation*}
\int_0^3 \frac{4 \sin ^{-1}\left(\frac{\sqrt{t}}{2}\right)}{t (4-t)}{\rm d }t=\frac{\pi}{3}\log 3+Cl_2\left(\frac{2\pi}{3}\right)+Cl_2\left(\frac{\pi}{3}\right)
\end{equation*}
and
\begin{equation*}
\int_0^3 \frac{1}{\sqrt{t (4-t)}} {\rm d }t=\frac{2 \pi }{3},
\end{equation*}
It follows from (\ref{e:74}) that

\begin{align}\label{e:75}
&\sum_{k=1}^{\infty}\frac{3^k\left(H_{2k}-\frac{1}{2}H_k\right)}{k\binom{2k}{k}}=\sqrt{3}\left(\frac{\pi}{3}\log 3+Cl_2\left(\frac{2\pi}{3}\right)+Cl_2\left(\frac{\pi}{3}\right)\right).
\end{align}
Thus, we conclude that
\begin{align*}
&\sum_{k=1}^{\infty}\frac{3^k\left(H_{2k}-H_k\right)}{k\binom{2k}{k}}=\sum_{k=1}^{\infty}\frac{3^k\left(H_{2k}-\frac{1}{2}H_k\right)}{k\binom{2k}{k}}\\
&-\frac{1}{2}\sum_{n=1}^{\infty}\frac{3^nH_n}{n\binom{2n}{n}}+\sqrt{3}\left(\frac{\pi}{3}\log 3+Cl_2\left(\frac{2\pi}{3}\right)+Cl_2\left(\frac{\pi}{3}\right)\right).
\end{align*}
Now the proof follows from (\ref{e:75}) and (\ref{e:70}).
\end{proof}
\end{exam}
\begin{exam}\label{Example16}
We have
\begin{align*}
\int_{0}^{1}\frac{(x+1)\log(x+1)}{x^3+1}{\rm d }x&=\frac{1}{36}\left(\psi^\prime(5/6)-\psi^\prime(1/3)\right)\\
&+\frac{\sqrt{3}}{3} \left(Cl_2\left(\frac{2\pi}{3}\right)+\frac{\pi\log 3}{3}\right)-\frac{\pi ^2}{27}.
\end{align*}
\end{exam}
\begin{proof}
Setting $q=1$ in Theorem \ref{Theorem2.8} we get
\begin{align*}
\int_{0}^{1}\frac{(x+1)\log(x+1)}{x^3+1}{\rm d }x&=\frac{1}{36}\left(\psi^\prime(5/6)-\psi^\prime(1/3)\right)\\
&+\frac{1}{3}\sum_{n=1}^{\infty}\frac{3^n\left(H_{2n}-H_n\right)}{n\binom{2n}{n}}.
\end{align*}
Applying Example \ref{Example15} this completes the proof.
\end{proof}
\begin{exam}\label{Example17}We have the following equality
\begin{align}\label{e:76}
\int_{0}^{1}\frac{\log\left(x^3+1\right)}{x+1}{\rm d }x&=\frac{1}{2}Li_2\left(-\frac{1}{3}\right)+\frac{\log^22}{2}+\frac{\log^23}{4}-\frac{\pi^2}{36}\\
&=\frac{1}{2}\log^22+\sum_{n=1}^{\infty}(-1)^{n+1}\sum_{k=2}^{n}\frac{(-1)^{k}}{k(k-1)\binom{n+k}{k}}, \label{e:77}
\end{align}
where $Li_2(z)=\sum_{n=1}^{\infty}\frac{z^n}{n^2}\quad (|z|\leq 1)$ is the dilogarithm function.
\end{exam}
\begin{proof}Using $x^3+1=(x+1)(1-x+x^2)$ we get
\begin{align}\label{e:78}
\int_{0}^{1}\frac{\log\left(x^3+1\right)}{x+1}{\rm d }x=\frac{1}{2}\log^22+\int_{0}^{1}\frac{\log\left(1-x+x^2\right)}{x+1}{\rm d }x.
\end{align}
On the other hand, expanding the  functions in the integrand into their power series it follows that
\begin{align*}
\int_{0}^{1}\frac{\log\left(1-x+x^2\right)}{x+1}{\rm d }x&=-\int_{0}^{1}\sum_{k=1}^{\infty}\frac{x^{k+1}(1-x)^{k-1}}{k+1}\cdot\sum_{k=0}^{\infty}(-1)^kx^kx\\
&=-\int_{0}^{1}\sum_{n=0}^{\infty}\sum_{k=0}^{n}\frac{(-1)^{n-k}x^{n+1}(1-x)^{k+1}}{k+1}{\rm d }x.
\end{align*}
Inverting the order of integration and summation we get
\begin{align*}
\int_{0}^{1}\frac{\log\left(1-x+x^2\right)}{x+1}{\rm d}x&=-\int_{0}^{1}\sum_{k=1}^{\infty}\frac{x^{k+1}(1-x)^{k+1}}{k+1}.\sum_{k=0}^{\infty}(-1)^kx^k{\rm d}x\\
&=-\int_{0}^{1}\sum_{n=0}^{\infty}\sum_{k=0}^{n}\frac{(-1)^{n-k}x^{n+1}(1-x)^{k+1}}{k+1}{\rm d}x\\
&=-\sum_{n=0}^{\infty}\sum_{k=0}^{n}\frac{(-1)^{n-k}}{k+1}\int_{0}^{1}x^{n+1}(1-x)^{k+1}{\rm d}x\\
&=-\sum_{n=0}^{\infty}\sum_{k=0}^{n}\frac{(-1)^{n-k}}{k+1}\int_{0}^{1}B(n+2,k+2){\rm d}x.
\end{align*}
Using the gamma-beta functional equation this gives

\begin{align*}
\int_{0}^{1}\frac{\log\left(1-x+x^2\right)}{x+1}{\rm d}x&=-\sum_{n=0}^{\infty}\sum_{k=0}^{n}\frac{(-1)^{n-k}}{k+1}\frac{(n+1)!(k+1)!}{(n+k+3)!}\\
&=\sum_{n=0}^{\infty}(-1)^n\sum_{k=0}^{n}\frac{(-1)^{k+1}}{(k+1)(n+k+3)\binom{n+k+2}{k+1}}\\
&=\sum_{n=0}^{\infty}(-1)^n\sum_{k=1}^{n}\frac{(-1)^{k}}{k(n+k+2)\binom{n+k+1}{k}}.
\end{align*}
Simplifying this gives
\begin{align}\label{e:79}
\int_{0}^{1}\frac{\log\left(1-x+x^2\right)}{x+1}{\rm d}x=\sum_{n=1}^{\infty}(-1)^{n+1}\sum_{k=2}^{n}\frac{(-1)^{k}}{k(k-1)\binom{n+k}{k}}.
\end{align}

Substituting (\ref{e:79}) in (\ref{e:78}) the proof of  (\ref{e:77}) completes. Now let us prove  (\ref{e:76}). If we make the substitution $x=\frac{1}{u}$, and use $x^3+1=(x+1)(1-x+x^2)$, we conclude that
\begin{align*}
\int_{0}^{1}\frac{\log\left(x^3+1\right)}{x+1}{\rm d}x&=\frac{\log^22}{2}+\int_1^{\infty } \frac{\log \left(t^2-t+1\right)-2 \log t}{t^2+t} {\rm d}t.
\end{align*}
Mathematica is able to evaluate the integral on the right-hand side and  gives
\begin{equation*}
\frac{1}{2}Li_2\left(-\frac{1}{3}\right)+\frac{\log^23}{4}-\frac{\pi^2}{36},
\end{equation*}
which completes the proof of (\ref{e:76}).
\end{proof}
\begin{exam}\label{Example18}We have
\begin{equation*}
\int_0^1 \frac{\log \left(x^3+1\right)}{x^2+1}{\rm d}x=-\frac{5}{3}G+\frac{\pi}{8}\log 2+\frac{\pi}{3}\log \left(\sqrt{3}+2\right)
\end{equation*}
\end{exam}
\begin{proof}
Making the substitution $x=\frac{1}{t}$, one easily gets
\begin{equation}\label{e:80}
\int_{0}^{1}\frac{\log\left(x^3+1\right)}{x^2+1}{\rm d}x=\frac{1}{2}\int_{0}^{\infty}\frac{\log\left(t^3+1\right)}{t^2+1}{\rm d}t-\frac{3}{2}\int_{1}^{\infty}\frac{\log x}{x^2+1}{\rm d}x
\end{equation}

\textit{Mathematica} gives us
\begin{equation}\label{e:81}
\int_0^{\infty } \frac{\log \left(x^3+1\right)}{x^2+1}{\rm d}x=-\frac{G}{3}+\frac{1}{4} \pi  \log (2)+\frac{1}{3} (2 \pi ) \log \left(\sqrt{3}+2\right)
\end{equation}
and
\begin{equation}\label{e:82}
\int_1^{\infty } \frac{\log (x)}{x^2+1}{\rm d}x=G.
\end{equation}
Combining the equations (\ref{e:80})- (\ref{e:82}) the conclusion immediately follows.
\end{proof}
\begin{rem}The integral in Example \ref{Example18} has been previously evaluated in \cite{19}.
\end{rem}
\begin{exam}\label{Example19}Setting $m=2$ in (\ref{e:45}), we have
\begin{equation*}
\frac{1}{\pi}\int_{0}^{\infty}\frac{\log\left(x^4+1\right)}{x^2+1}{\rm d}x=\log\left(2+\sqrt{2}\right).
\end{equation*}
\end{exam}
\begin{exam}\label{Example20}Setting $m=3$ in (\ref{e:46}), we have
\begin{equation*}
\frac{1}{\pi}\int_{0}^{\infty}\frac{\log\left(x^6+1\right)}{x^2+1}{\rm d}x=\log 6.
\end{equation*}
\end{exam}
\begin{exam}\label{Example21}Setting $m=4$ in (\ref{e:45}), we have
\begin{equation*}
\frac{1}{\pi}\int_{0}^{\infty}\frac{\log\left(x^8+1\right)}{x^2+1}{\rm d}x=\log \left(2+\sqrt{2-\sqrt{2}}\right)+\log \left(2+\sqrt{2+\sqrt{2}}\right).
\end{equation*}
\end{exam}
\begin{exam}\label{Example22}Setting $m=2$ in (\ref{e:47}), we have
\begin{align*}
\int_{0}^{\infty}\frac{x\log\left(x^6+1\right)}{x^3+1}{\rm d}x&=\frac{2 \pi ^2}{9}-\frac{2\pi\sqrt{3}}{9}  \log 2+\frac{2\pi\sqrt{3}}{9} \bigg[\log \left(\sqrt{3}+1\right)\\
&-2 \log \left(\sin \left(\frac{\pi }{12}\right)\right)\bigg].
\end{align*}
\end{exam}
\begin{exam}\label{Example23}Setting $m=4$ in (\ref{e:47}), we have
\begin{align*}
&\int_0^{\infty } \frac{x \log \left(x^{12}+1\right)}{x^3+1}{\rm d}x=\frac{4 \pi ^2}{9}-\frac{2 \pi  \sqrt{3}}{3} \log 2+\frac{2 \pi  \sqrt{3}}{9} \bigg[\log \left(\sqrt{2}+1\right)\\
&-2 \log \left(\sin \left(\frac{\pi }{8}\right)\right)-2 \log \left(\sin \left(\frac{\pi }{24}\right)\right)+\log \left(1+2 \cos \left(\frac{\pi }{12}\right)\right)\bigg].
\end{align*}
\end{exam}

\begin{exam}\label{Example24}Setting $m=3$ in (\ref{e:48}), we have
\begin{align*}
&\int_0^{\infty } \frac{x \log \left(x^9+1\right)}{x^3+1}{\rm d}x=\frac{\pi ^2}{3}-\frac{4 \pi  \sqrt{3}}{9} \log 2+\frac{5 \pi  \sqrt{3}}{9} \log 3\\
&-\frac{2 \pi  \sqrt{3}}{9}\bigg[2 \log \left(\sin \left(\frac{\pi }{9}\right)\right)-\log \left(1+2 \cos \left(\frac{2 \pi }{9}\right)\right)\bigg].
\end{align*}
\end{exam}

\begin{exam}\label{Example25}For  $m=3$, $m=5$ and $m=7$  Theorem \ref{Theorem3.1} produces, respectively
\begin{align*}
\sum_{n=1}^{\infty}(-1)^{n+1}\left(\frac{H_n}{6n+1}-\frac{H_n}{6n+3}+\frac{H_n}{6n+5}\right)=\frac{\pi}{2}\log 6-3G,
\end{align*}
\begin{align*}
\sum_{n=1}^{\infty}(-1)^{n+1}\bigg(\frac{H_n}{10n+1}&-\frac{H_n}{10n+3}+\frac{H_n}{10n+5}-\frac{H_n}{10n+7}+\frac{H_n}{10n+9}\bigg)\\
&=\frac{\pi}{2}\log\left(10+4\sqrt{5}\right)-5G.
\end{align*}
and
\allowdisplaybreaks
\begin{align*}
&\sum_{n=1}^{\infty}(-1)^{n+1}\bigg(\frac{H_n}{14n+1}-\frac{H_n}{14n+3}+\frac{H_n}{14n+5}-\frac{H_n}{14n+7}\\
&+\frac{H_n}{14n+9}-\frac{H_n}{14n+11}+\frac{H_n}{14n+13}\bigg)=2\pi\log 2-7G\\
&+\frac{\pi}{2}\bigg(\log\bigg(1+\sqrt{\frac{1}{2}\bigg(1-\cos\frac{\pi}{7}\bigg)}\bigg)+\log\bigg(1+\sqrt{\frac{1}{2}\bigg(1-\cos\frac{3\pi}{7}\bigg)}\bigg)\\
&+\log\bigg(1+\sqrt{\frac{1}{2}\bigg(1-\cos\frac{5\pi}{7}\bigg)}\bigg)\bigg).
\end{align*}
Here we used $\cos\frac{\pi}{5}=\frac{\sqrt{5}+1}{4}$ and $\cos\frac{3\pi}{5}=\frac{\sqrt{5}-1}{4}$.
\end{exam}

\begin{exam}\label{Example26} Applying Theorem \ref{Theorem3.2} for $m=1$  and $m=3$  we have, respectively

\begin{align*}
\sum_{n=1}^{\infty}(-1)^{n+1}\bigg(\frac{H_n}{3n+1}+\frac{H_n}{3n+2}\bigg)&=\frac{1}{12}\big(\psi^\prime(5/6)-\psi^\prime(1/3)\big)\\
&-\frac{\pi^2}{9}+\frac{\pi\sqrt{3}}{3}\log 3
\end{align*}
and
\begin{align*}
&\sum_{n=1}^{\infty}(-1)^{n+1}\bigg(\frac{H_n}{9n+1}+\frac{H_n}{9n+2}-\frac{H_n}{9n+4}-\frac{H_n}{9n+5}+\frac{H_n}{9n+7}
+\frac{H_n}{9n+8}\bigg)\\
&=\frac{1}{4}\left(\psi^\prime(5/6)-\psi^\prime(1/3)\right)-\frac{\pi^2}{3}+\frac{5\pi\sqrt{3}}{9}\log 3-\frac{4\pi\sqrt{3}}{9} \log2\\
&-\frac{2\pi\sqrt{3}}{9}\left[2\log\left(\sin\frac{\pi}{9}\right)-\log\left(2\cos\frac{2\pi}{9}+1\right)\right].
\end{align*}
\end{exam}
\begin{exam}\label{Example27} Applying Theorem \ref{Theorem3.3} for $m=1$  and $m=3$  we have, respectively

\begin{align*}
\sum_{n=1}^{\infty}(-1)^{n+1}\bigg(\frac{H_n}{4n+1}+\frac{H_n}{4n+2}\bigg)&=\frac{1}{16}\big(\psi^\prime(7/8)-\psi^\prime(3/8)\big)\\
&+\frac{3\pi\sqrt{2}}{4}\log 2-\frac{\pi^2\sqrt{2}}{8}
\end{align*}
and

\begin{align*}
&\sum_{n=1}^{\infty}(-1)^{n+1}\bigg(\frac{H_n}{12n+1}+\frac{H_n}{12n+3}-\frac{H_n}{12n+5}-\frac{H_n}{12n+7}+\frac{H_n}{12n+9}\\
&+\frac{H_n}{12n+11}\bigg)=\frac{1}{8}\left(\psi^\prime(7/8)-\psi^\prime(3/8)\right)+\frac{3\pi\sqrt{2}}{4}\log 2-\frac{3\pi^2\sqrt{2}}{8}\\
&+\frac{\pi\sqrt{2}}{4}\log\bigg(21+12\sqrt{6}\bigg).
\end{align*}

\end{exam}
\section{Concluding remarks}
\begin{rem}
We verified all of our examples numerically one by one through  \textit{Mathematica} program to assure their accuracy.
\end{rem}
\begin{rem}
Examples \ref{Example1}, \ref{Example2} and \ref{Example3}  are not new and special cases of some general formulas proved in \cite{19}. Example \ref{Example23} is also not new and  proved in \cite{19}. It seems that the rest of our examples has not appeared previously in the literature. Example \ref{Example8} appeared as monthly problem 12221 proposed by the author in \cite{5}. The computation of the value of the integral given in Example \ref{Example20} has been proposed as a magazine problem 2107 in \cite{25} by S. M. Stewart.
\end{rem}
\begin{rem}
Employing the results obtained in \cite{14}, \cite{11} and \cite{15} we can evaluate the values of the polygamma function $\psi^\prime(x)$ at rational arguments in terms of some well known special functions such as the Riemann zeta function, Clausen's function and the Hurwitz zeta function.
\end{rem}
\begin{rem}It is important to note that using the pochhammer symbol $(a)_k=\frac{\Gamma(a+k)}{\Gamma(a)}$, the reflection formula for the gamma function and the simple fact $(2k)!=2^{2k}k!(\frac{1}{2})_k$,  the series given in the statements of Theorem \ref{Theorem1} and Corollary \ref{Corollary3} can be written in terms of the hypergeometric function ${}_2F_1(a,b;c:x)$ respectively as follows:
\begin{equation*}
\sum_{k=1}^{\infty}\frac{2^k}{k}\frac{\Gamma\left(k+\frac{1}{q}\right)\Gamma\left(k+1-\frac{1}{q}\right)}{(2k)!}=\frac{\pi}{\sin\frac{\pi}{q}}\int_{0}^{1/2}\frac{{}_2F_1\left(\frac{1}{q},1-\frac{1}{q};\frac{1}{2}:x\right){\rm d}x}{x}
\end{equation*}
and
\begin{equation*}
\sum_{k=1}^{\infty}\frac{3^k}{k}\frac{\Gamma\left(k+\frac{1}{q}\right)\Gamma\left(k+1-\frac{1}{q}\right)}{(2k)!}=\frac{\pi}{\sin\frac{\pi}{q}}\int_{0}^{3/4}\frac{{}_2F_1\left(\frac{1}{q},1-\frac{1}{q};\frac{1}{2}:x\right){\rm d}x}{x},
\end{equation*}
with $q>1$.
\end{rem}
\begin{rem} For all even integers $q\geq 2$ the integrals
\begin{equation*}
\int_{0}^{1}\frac{\log\left(x^q+1\right)}{x+1}{\rm d}x
\end{equation*}
have been calculated in \cite{23}; see also \cite{21}. For $q=3$ we in our Example~\ref{Example17} calculated it but for odd integers  $q\geq 5$ the value of these integrals are not known in  the literature as far as we know, thus, it will be an interesting area of  source to study these integrals for odd integers $q\geq 5$. These integrals are closely connected with the Herglotz function $F$, defined, for $q\in\mathbb{C}\backslash(-\infty,0]$, by
$$
F(q)=\sum_{n=1}^{\infty}\frac{1}{n}(\psi(qn)-\log(qn)),
$$
where $\psi$ is the digamma function. Indeed, we have
\begin{equation*}\label{Eq. 1}
\int_{0}^{1}\frac{\log\left(x^q+1\right)}{x+1}{\rm d}x=F(2q)-2F(q)+F(q/2)+\frac{1}{2q}\zeta(2);
\end{equation*}
see \cite{16,18,21}.
\end{rem}
\begin{rem} For even integers $m\geq 2$ we in our Theorem~\ref{Theorem2.4} evaluate the family of integrals
\begin{equation*}
\int_{0}^{1}\frac{\log\left(x^m+1\right)}{x^2+1}{\rm d}x
\end{equation*}
but in the case of $m$ odd  we are unable to evaluate them except for $m=3$. Thus, the evaluation of these integrals for odd integer $m\geq 5$  is an interesting subject of investigations.
\end{rem}
\textbf{Acknowledgements} The author thanks the editor and the anonymous referee for their useful suggestions which improve the presentation of the paper.
\section{Funding} No funding was received to assist with the preparation of this manuscript.
\section{Declarations}

\textbf{Conflict of interest} The author has no conflicts of interest to declare that are
relevant to the content of this article.
\section{Data availability}
This paper has no associated data.

 \end{document}